\newcommand{\litem}[1]{\item[#1\hfill]}
\newenvironment{mylist}[1]{
    \setbox1=\hbox{#1}
    \begin{list}{}{
            \setlength{\labelwidth}{\wd1}
            \setlength{\leftmargin}{\wd1}
            \addtolength{\leftmargin}{0em}
            \addtolength{\leftmargin}{\labelsep}
            \setlength{\rightmargin}{1em}}}{\end{list}}
\newcommand\Item[1][]{%
  \ifx\relax#1\relax  \item \else \item[#1] \fi
  \abovedisplayskip=0pt\abovedisplayshortskip=0pt~\vspace*{-\baselineskip}}
\newcommand{\dbg}[3]{cDB^{+}({#1},{#2},{#3})}
\newcommand{\bg}[3]{cDB({#1},{#2},{#3})}
\newtheorem{theorem}{Theorem}
\newtheorem{lemma}{Lemma}
\newtheorem{corollary}{Corollary}
\newtheorem{fact}{Fact}
\newtheorem{definition}{Definition}
\author{Tiziana Calamoneri\affiliationmark{1}
  \and Angelo Monti\affiliationmark{1}
  \and Blerina Sinaimeri\affiliationmark{2}}
\title[On the domination number of $t$-constrained de Bruijn graphs]{On the domination number of $t$-constrained de Bruijn graphs\thanks{This work is partially supported by the following research projects: {\em Sapienza} University of Rome, projects: no. RM120172A3F313FE "Measuring the similarity of biological and medical structures through graph isomorphism", no. RM11916B462574AD "A deep study of phylogenetic tree reconciliations" and no. RM1181642702045E "Comparative Analysis of Phylogenies". }}
\affiliation{
  Computer Science Department, Sapienza University of Rome, Italy\\
 Luiss University, Rome, Italy}
\keywords{domination number, de Bruijn graph, Kautz graph}
\begin{document}
\publicationdetails{24}{2022}{2}{2}{8879}
\maketitle
\begin{abstract}
Motivated by the work on the domination number of directed de Bruijn graphs and some of its generalizations, in this paper we introduce a natural generalization of de Bruijn graphs (directed and undirected), namely \emph{$t$-constrained de Bruijn} graphs, where $t$ is a positive integer, and then study the domination number of  these graphs.

Within the definition of $t$-constrained de Bruijn graphs, de Bruijn and Kautz graphs correspond to 1-constrained and 2-constrained de Bruijn graphs, respectively. 
This generalization inherits many structural properties of de Bruijn graphs and may have similar applications in interconnection networks or bioinformatics.  

We establish upper and lower bounds for the domination number on $t$-constrained de Bruijn graphs both in the directed and in the undirected case. 
These bounds are often very close and in some  cases we are able to find the exact value.

\end{abstract}


\section{Introduction}\label{sec:intro}
In graph theory, the study of domination and dominating sets plays a prominent role.  This topic has been extensively studied for  more than 30 years \cite{Hedetniemi1998,Hedetniemi1990} due to its  applications in several areas, \textit{e.g.} wireless networks \cite{Dai2004},  protein-protein interaction networks \cite{Milenkovic2011},  social networks \cite{Bonato2015}. For a comprehensive treatment of domination and its variations, we refer to  \cite{Hedetniemi1998,Haynes2020,Haynes2021}.

In an undirected graph, a vertex \emph{dominates} itself and all its neighbors.  The concept of domination can be naturally transferred to directed graphs, where a vertex dominates itself and all of its outgoing neighbors.  A \emph{dominating set} of a (directed or undirected) graph is a subset $S$  of vertices such that every vertex in the graph is dominated by at least one vertex in $S$. 
The domination number of a graph $G$ is the cardinality of a smallest dominating set of $G$, and is denoted by $\gamma(G)$. 
Finding a minimum dominating set for general graphs is widely know to be NP-hard \cite{garey1979} and hence it is a challenge to determine classes of graphs for which $\gamma(G)$ can be exactly computed. 

Indeed, finding a closed formula for the domination number is a well-studied problem and has been solved for several classes of graphs such as
directed de Bruijn graphs \cite{Blazsik2002DominatingSI}, directed Kautz graphs \cite{KIKUCHI2003}, generalized Petersen graphs \cite{Petersen_graphs}, Cartesian product of two directed paths \cite{Mollard2014} and graphs defined by two levels of the $n$-cube \cite{BADAKHSHIAN2019}. Furthermore, close bounds are provided for some generalizations of the previous classes \cite{BALOGH2021,Dong2015}.

In this paper we focus on the well-known de Bruijn graphs which have various applications in different areas as for example bioinformatics \cite{Orenstain2017,Pevzner2001}, interconnection networks \cite{Esfahanian1985} and peer-to-peer systems \cite{Koorde}.  De Bruijn graphs are defined as follows:

\begin{definition}
Given an alphabet $\Sigma$ and a positive integer $n$, the {\em directed de Bruijn graph} of dimension $n$ on $\Sigma$ is defined by taking  as vertices all the sequences in $\Sigma^n$ and as edges the ordered pairs of the form $\bigl((a_1, \ldots, a_n), (a_2, \ldots, a_n, a_{n+1})\bigr)$, where $a_1, \ldots, a_{n+1}$ are in $\Sigma$. 
\end{definition}

\begin{figure}[h]
\centering
\includegraphics[scale=0.5]{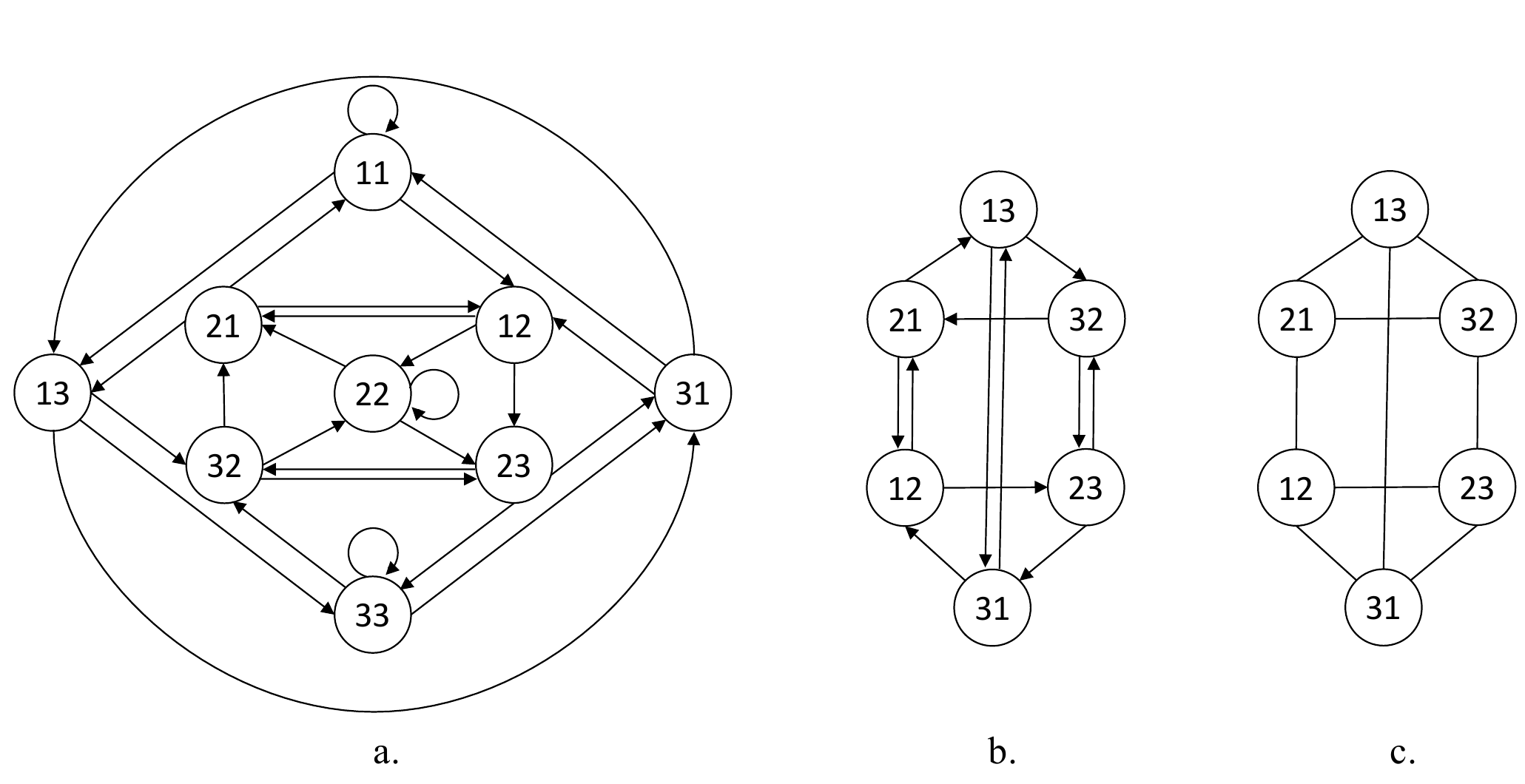}
\caption{For $\Sigma=\{1,2,3\}$  we show: (a) the directed  de Bruijn graph $cDB^+(d,1,2)$; (b) the directed Kautz graph $cDB^+(3,2,2)$; (c) the undirected Kautz graph $cDB(3,2,2)$.
}\label{fig.DB_and_K}
\end{figure}

In Fig.~\ref{fig.DB_and_K}.a we show a directed de Bruijn graph of dimension 2 on $\Sigma=\{1,2,3\}$.   Particular induced subgraphs of de Bruijn graphs have also been studied due to their applications related to both DNA assembly and high-performance or fault-tolerant computing \cite{Li2004}.  These subgraphs can be defined by choosing particular subsets of vertices. For instance, the subgraph induced by the sequences  of $\Sigma^n$ that do not contain equal neighboring characters  corresponds to the well-known directed Kautz graph \cite{Kautz1968} which has many properties that are desirable in computer networks \cite{Li2004,Bermond1993}.  In Fig.~\ref{fig.DB_and_K}.b we show a directed Kautz graph of dimension 2 on $\Sigma=\{1,2,3\}$. The undirected version of de Bruijn and Kautz graphs can be easily obtained by simply ignoring the direction of the edges and removing loops and multiple edges (see Fig.~\ref{fig.DB_and_K}.c as the undirected version of the Kautz graph in Fig.~\ref{fig.DB_and_K}.b). 

In this paper we propose a new natural generalization of the Kautz graphs, obtained by extending the constraint on the sequences labeling the vertices, from neighboring positions to an arbitrary distance $t$.

\begin{definition}\label{def:constrained}
Given a sequence ${\bf x}=x_1, \ldots, x_n \in \Sigma^n$  we say that ${\bf x}$ is {\em $t$-constrained}, for some integer $t$, if for all $1 \leq i < j \leq n$, whenever $x_i=x_j$ it holds $|i-j| \geq t$.
\end{definition}

Notice that every sequence in $\Sigma^n$ is trivially $1$-constrained while the sequences labeling Kautz graphs are $2$-constrained. Moreover, the non trivial cases of Definition~\ref{def:constrained} are when $1 \leq t \leq \min\{d,n \}$.  Indeed, if $t>\min\{d,n \}=n$ then any sequence in $\Sigma^n$ is trivially $t$-constrained, whereas if $t>\min\{d,n \}=d$ none of the sequences of $\Sigma^n$ is $t$-constrained.

We denote by $V(d,t,n)$ the set of \emph{all} $t$-constrained sequences from the set $\Sigma^n$. We now introduce $t$-constrained de Bruijn graphs. 

\begin{definition}
Given an alphabet $\Sigma$, with $|\Sigma|=d$,  and two positive integers $n$ and $t$, with $1 \leq t \leq \min\{d, n \}$, we define the \emph{directed $t$-constrained de Bruijn graph of dimension $n$ on $\Sigma$} as the subgraph of the directed de Bruijn graph of dimension $n$ on $\Sigma$ induced by the set  $V(d,t,n)$.
\end{definition}

We denote a $t$-constrained directed de Bruijn graph  with $\dbg{d}{t}{n}$ and its  undirected version with $\bg{d}{t}{n}$. In Fig.~\ref{fig.3deBruijin}, $\bg{4}{3}{4}$ is depicted.
Clearly, directed  de Bruijn graphs and directed Kautz graphs coincide with $\dbg{d}{1}{n}$ and $\dbg{d}{2}{n}$, respectively. 

In addition to their theoretical interest, $t$-constrained de Bruijn graphs may also find applications in interconnection networks where  it is important to design network topologies offering a high-level of symmetries. Indeed, it is easier to balance the traffic load, and hence to minimize the congestion, on network topologies with a high-level of symmetry.  

In bioinformatics area, $t$-constrained de Bruijn graphs could be more suitable than de Bruijn graphs in  modelling problems. For example, in genome rearrangement,  permutations of integers (\textit{i.e.} $n$-constrained sequences) are used to represent genomes \cite{Alekseyev2007,Lin2014}. However, it is known that genes often undergo duplication, thus the genome may contain different copies of the same gene. In this context   $t$-constrained sequences for $t < n$ may be used to model genomes where duplication of genes is allowed only at a certain distance in the genome. 

\begin{figure}[!h]
\centering
\includegraphics[scale=0.6]{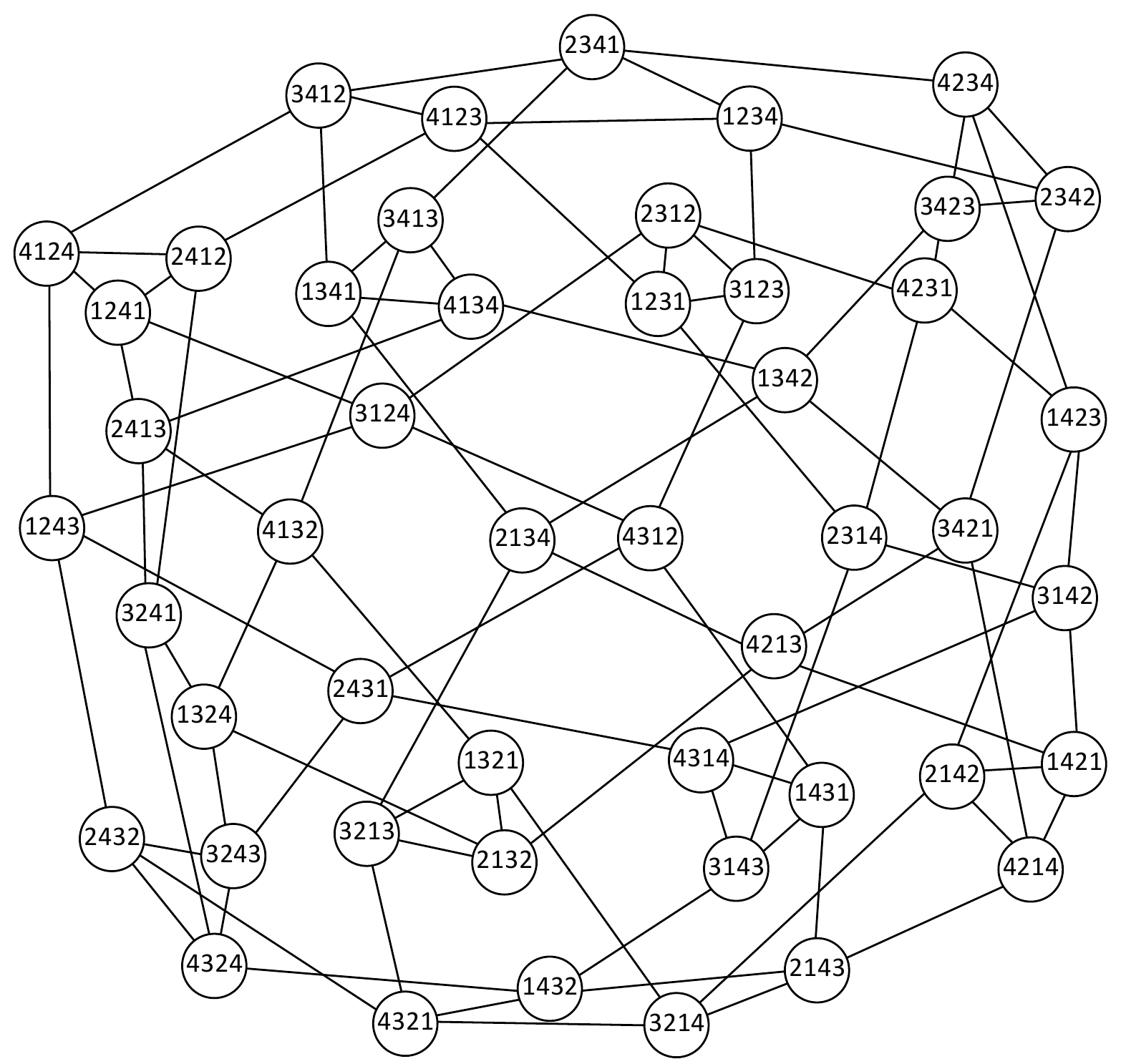}
\caption{For $\Sigma=\{1,2,3,4\}$ the constrained de Bruijn graph $cDB(4,3,4)$.
}\label{fig.3deBruijin}
\end{figure}

In this paper we provide a systematic study of the domination number of $t$-constrained de Bruijn graphs in both directed  and undirected case.  

Although the domination numbers for the directed case of de Bruijn  and Kautz graphs have been exactly determined  (see \cite{KIKUCHI2003,Blazsik2002DominatingSI,ARAKI2007}), the exact values in the undirected cases are still missing.  Here we provide close upper and lower bounds on the value of  $\gamma(\bg{d}{1}{n})$ and $\gamma(\bg{d}{2}{n})$. 
Furthermore, in the particular case when the sequences labeling the vertices are permutations (\textit{i.e.}  $t=n$), we determine the exact value of the domination number in both the directed and undirected case.  

We also consider the case where the sequences are partial $n$-permutations on the set of symbols when $|\Sigma|=d=n+c$, for some integer $c$. In this case we provide close upper and lower bounds for $\gamma(\bg{n+c}{n}{n})$ and $\gamma(\dbg{n+c}{n}{n})$.

Finally, we provide upper and lower bounds for the domination number of $\dbg{d}{t}{n}$.  Concerning the value of $\gamma(\bg{d}{t}{n})$ it remains an open problem to find an upper bound that is asymptotically better than the one trivially derived by the directed case. The results of this paper are summarized in Table~\ref{tab:results_directed} and Table~\ref{tab:results_undirected} for the directed and undirected cases, respectively. 

\begin{table}[h!]
\centering
\small
\setlength\tabcolsep{2pt} 
\setlength\extrarowheight{6pt}
\begin{tabular}{|l|r|c|}
\hline
\multicolumn{1}{|l|}{Graph $G$} & \multicolumn{1}{c|}{$\gamma(G)$}
 & \multicolumn{1}{c|}{ Ref.} \\
\hline
$\dbg{d}{1}{n}$ & 
$\gamma(G)=\left \lceil{\frac{d^n}{d+1}}\right \rceil$  &  \cite{Blazsik2002DominatingSI}  \\ \hline  

$\dbg{d}{2}{n}$  &
$\gamma(G)=(d-1)^{n-1}$ & \cite{KIKUCHI2003} \\ \hline 

$\dbg{d}{3}{n}$ & \makecell{$\gamma(G) = d(d-2)^{n-2}$  if $d$ even \\ $d(d-2)^{n-2}
\leq \gamma(G)\leq \Big(1+\Theta\big(\frac{1}{d^2}\big)\Big)d(d-2)^{n-2}$ if  $d$ odd } & {Thm.\ref{th.directed_d3n}} \\ \hline


$\dbg{d}{t}{n}$ & {\small
$\frac{d!}{(d-t)!}\frac{(d-t+1)^{n-t}}{(d-t+2)}
\leq \gamma(G)\leq  \Big(1+ \Theta\big( \frac{t}{d(d-t+1)}\big)\Big)\frac{d!}{(d-t)!}\frac{(d-t+1)^{n-t}}{(d-t+2)}$} &
Thm.\ref{th.directed_dtn} \\ \hline


%
$\dbg{n}{n}{n}$ &
$\gamma(G)=\left\lceil\frac{n}{2}\right\rceil(n-1)!$ &
Thm.\ref{theo:directed_NNN} \\ \hline

$\dbg{n+c}{n}{n}$ &
$\frac{1}{c+2} \frac{(n+c)!}{c!} \leq \gamma(G)  \leq \Big( 1+\Theta\big(\frac{1}{c}\big)\Big) \frac{1}{c+2}\frac{(n+c)!}{c!}$ & Thm.\ref{theo:directed_NCNN} \\ \hline

\end{tabular}
\caption{Summary of the results for directed $t$-constrained de Bruijn graphs.}\label{tab:results_directed}
\end{table}


\begin{table}[h!]
\centering
\small
\setlength\tabcolsep{2pt} 
\setlength\extrarowheight{6pt}

\begin{tabular}{|l|r|r|}
\hline

\multicolumn{1}{|l}{Graph $G$}&\multicolumn{1}{|c|}{$\gamma(G)$ }
 & \multicolumn{1}{c|}{ Ref.} \\
\hline

$\bg{d}{1}{2}$ &
$\gamma(G)=d-1$ & 
Thm.\ref{theo:bruijn_n_2} \\

$\bg{d}{1}{3}$  & 
$\gamma(G)=d \bigl\lceil\frac{d}{2}\bigr\rceil$ &
Thm.\ref{theo:bruijn_n_3}\\

$\bg{d}{1}{n}$, $n \geq 4$  &
$\frac{d^{n}}{2d+1}  \leq \gamma(G)\leq \Big( 2 - \Theta\big(\frac{1}{d}\big) \Big) \frac{d^{n}}{2d+1}$
&
Thm.\ref{theo:deBruij} \\

\hline 

$\bg{d}{2}{2}$  &
$\gamma(G) = d-1$ &
Thm.\ref{theo:kautz_n_2} \\

$\bg{d}{2}{3}$  &
$\frac{d(d-1)}{2}\leq \gamma(G) \leq \bigl\lfloor  \frac{d^2}{2} \bigr\rfloor$ &
Thm.\ref{theo:kautz_n_3} \\

$\bg{d}{2}{n}, n \geq 4$  &
$
\frac{d(d-1)^{n-1}}{2d-1} \leq \gamma(G)\leq \Big(2- \Theta\big(\frac{1}{d}\big) \Big)\frac{d(d-1)^{n-1}}{2d-1}
$
&
Thm.\ref{theo:Kautz} \\

\hline

$\bg{d}{3}{n}$ &
$
\frac{d(d-1)(d-2)^{n-2}}{2d-3}\leq \gamma(G)\leq \Big(2- \Theta\big(\frac{1}{d}\big) \Big) \frac{d(d-1)(d-2)^{n-2}}{2d-3}
$ &
Thm.\ref{theo:undirected_d3n} \\ \hline


$\bg{n}{n}{n}$ &
$\gamma(G)=\left\lceil\frac{n}{3}\right\rceil(n-1)!$ &
Thm.\ref{theo:undirected_NNN} \\ \hline

$\bg{n+c}{n}{n}$ &
$ \frac{1}{2c+3}\frac{(n+c)!}{c!} \leq \gamma(G) \leq \Bigl(1+ \Theta\bigl(\frac{1}{c}+ \frac{1}{n}\bigr)\Bigr) \frac{1}{2c+3}\frac{(n+c)!}{c!}$ & Thm.\ref{theo:undirected_NCNN} \\ \hline

\end{tabular}
\caption{Summary of the results for undirected $t$-constrained de Bruijn graphs.}
\label{tab:results_undirected}
\end{table}

\section{Preliminaries}\label{sec:preliminaries}

Let $G$ be an undirected graph with $V(G)$ and $E(G)$ its vertex and edge set, respectively.  Let $v\in V(G)$;  $N(v)$ denotes the \emph{neighborhood} of $v$, \textit{i.e.} the set of vertices that are adjacent to $v$. 
Similarly, for a directed graph we denote by $N^{+}(v)$ and $N^{-}(v)$, the out- and in-neighborhood, respectively.
We define $\Delta(G)$ as the $\max_v \{|N(v)|\}$ if $G$ is undirected and the $\max_v \{|N^+(v)|\}$ if $G$ is directed.

Given an undirected  graph $G$, a set of vertices $S \subseteq V(G)$ is a \emph{dominating set} if, for every vertex $v$, either $v \in S$ or  $N(v)\cap S \neq \emptyset$.  Hence, we will say that $v$ \emph{dominates} $N(v)\cup\{v\}$. The dominating set is defined similarly for directed graphs by considering $N^{+}(v)$ instead of $N(v)$. 

The minimum over the cardinality of all dominating sets of (un)-directed $G$ is denoted by $\gamma(G)$.

For any graph $G$, a straightforward lower bound on $\gamma(G)$, following by the definition of a dominating set, is 
\begin{equation}
\label{eq:grado}
   \gamma(G)\geq \left \lceil \frac{|V(G)|}{\Delta(G)+1}\right \rceil .
\end{equation}

The sequences representing the vertices of our graphs are on an alphabet $\Sigma$ with $d$ symbols; 
without loss of generality we assume that $\Sigma=\{1,2, \ldots, d\}=[d]$.
The concatenation of two strings $\textbf{x}, \textbf{y}$ is denoted $\textbf{x}\cdot \textbf{y}$.

As already mentioned, de Bruijn and Kautz graphs coincide with $\dbg{d}{1}{n}$ and  $\dbg{d}{2}{n}$, respectively. 
More in general, we have the following hierarchy: 
$$\dbg{d}{n}{n}, \, \dbg{d}{n-1}{n}, \, \ldots,\, \dbg{d}{1}{n}$$ where each graph in the sequence is a proper subgraph of its successor. The same hierarchy holds also for the undirected case.

From here on we assume that $n\geq 2$ and $d \geq 2$. Indeed, when $d=1$ the corresponding graph contains a single node,
while if $n=1$ both $\dbg{d}{1}{1}$ and $\bg{d}{1}{1}$ 
are complete graphs on $d$ vertices and hence their domination number is 1.

\begin{fact}\label{fact:lower_degree}
Given a directed $t$-constrained de Bruijn graph $cDB^+(d,t,n)$ and its undirected version $cDB(d,t,n)$ we have:
\begin{itemize}
\item 
the cardinality of the vertex set for both graphs is:
\begin{equation}
\label{eq:vertices}
|V(d,t,n)|=\frac{d!}{(d-t)!}(d-t+1)^{n-t}
\end{equation}

\item
$\Delta(\dbg{d}{t}{n})=d-t+1$ and $\Delta(\bg{d}{t}{n})\leq 2(d-t+1)$

\item
from the previous two items and from Equation~\ref{eq:grado} we have:

\begin{equation}
\label{eq:lower_bound_grado_diretto}
\gamma(cDB^+(d,t,n)) \geq \left \lceil \frac{d!}{(d-t+2)!}(d-t+1)^{n-t+1} \right \rceil 
\end{equation}

and 
\begin{equation}
\label{eq:lower_bound_grado_nondiretto}
\gamma(cDB(d,t,n)) \geq \left \lceil \frac{d!(d-t+1)^{n-t}}{(d-t)!(2d-2t+3)}\right \rceil.
\end{equation}
\end{itemize}
\end{fact}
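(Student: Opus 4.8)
The plan is to establish the three bullet points in order, since each feeds into the next. For the vertex count in Equation~\ref{eq:vertices}, I would count the $t$-constrained sequences in $\Sigma^n$ directly by a position-by-position argument. A sequence ${\bf x}=x_1,\ldots,x_n$ is $t$-constrained exactly when, for every position $i$, the symbol $x_i$ differs from the $t-1$ symbols immediately preceding it (i.e.\ from $x_{i-1},\ldots,x_{i-t+1}$, those that exist). So the first symbol $x_1$ can be chosen in $d$ ways, $x_2$ in $d-1$ ways (it must avoid $x_1$), and more generally for $2\le i\le t$ there are $d-i+1$ choices, since $x_i$ must avoid the $i-1$ distinct symbols $x_1,\ldots,x_{i-1}$ (these are distinct because the prefix is itself $t$-constrained, hence in particular has no two equal symbols within distance $t-1$). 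For $i>t$, the forbidden symbols are exactly the $t-1$ preceding ones, which are pairwise distinct by the same reasoning, giving $d-t+1$ choices each. Multiplying, the count is $d(d-1)\cdots(d-t+1)\cdot(d-t+1)^{n-t}=\frac{d!}{(d-t)!}(d-t+1)^{n-t}$, as claimed.

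For the degree bounds, consider a vertex ${\bf x}=x_1,\ldots,x_n$ in $\dbg{d}{t}{n}$. Its out-neighbors are the $t$-constrained sequences of the form $x_2,\ldots,x_n,a$ with $a\in\Sigma$; such a sequence is $t$-constrained iff the suffix $x_2,\ldots,x_n$ is (which it is, being a substring of ${\bf x}$) and $a$ avoids $x_n,x_{n-1},\ldots,x_{n-t+2}$, i.e.\ $a$ must differ from the last $t-1$ symbols of ${\bf x}$. These $t-1$ symbols are pairwise distinct, so exactly $d-(t-1)=d-t+1$ choices of $a$ are valid, giving $|N^+({\bf x})|\le d-t+1$, with equality whenever $n\ge t-1$; hence $\Delta(\dbg{d}{t}{n})=d-t+1$. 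In the undirected graph, $N({\bf x})$ is contained in the union of the out- and in-neighborhoods of ${\bf x}$ in the directed graph (after discarding loops), and the in-degree is bounded symmetrically by $d-t+1$ via the analogous argument on prefixes; therefore $\Delta(\bg{d}{t}{n})\le 2(d-t+1)$.

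Finally, both displayed lower bounds are immediate consequences of Equation~\ref{eq:grado} applied with the values just computed. Substituting $|V(d,t,n)|=\frac{d!}{(d-t)!}(d-t+1)^{n-t}$ and $\Delta+1=d-t+2$ into Equation~\ref{eq:grado} for the directed case gives $\gamma(\dbg{d}{t}{n})\ge\big\lceil\frac{d!}{(d-t)!}\frac{(d-t+1)^{n-t}}{d-t+2}\big\rceil=\big\lceil\frac{d!}{(d-t+2)!}(d-t+1)^{n-t+1}\big\rceil$, matching Equation~\ref{eq:lower_bound_grado_diretto}; and substituting $\Delta+1\le 2(d-t+1)+1=2d-2t+3$ for the undirected case yields Equation~\ref{eq:lower_bound_grado_nondiretto}. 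The only mildly delicate point throughout is the repeated use of the fact that within any $t$-constrained sequence the symbols in any window of $t-1$ consecutive positions are pairwise distinct, which is exactly the defining condition and so requires no separate argument; I expect no real obstacle here, the content being essentially a careful bookkeeping of choices.
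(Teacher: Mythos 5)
The paper states this Fact without any proof, so there is nothing to compare against; your argument is the natural one and it is correct: the window characterization of $t$-constrained sequences (each symbol must avoid the $t-1$ pairwise-distinct symbols preceding it) gives the vertex count and the out-/in-degree bound $d-t+1$, and the two displayed inequalities then follow by direct substitution into Equation~(\ref{eq:grado}). The only cosmetic quibble is the aside ``with equality whenever $n\ge t-1$'' (which always holds under the paper's standing assumption $t\le n$); the real point for attaining $\Delta(\dbg{d}{t}{n})=d-t+1$ is that for $t\ge 2$ no $t$-constrained sequence with $n\ge2$ is constant, so no loop is discarded, and for $t=1$ any non-constant vertex has out-degree exactly $d$.
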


Note that, for $t\geq 2$,  the value in (\ref{eq:lower_bound_grado_diretto}) is always an integer and thus, the ceiling can be removed.

\section{Domination numbers of $\dbg{d}{1}{n}$ and $\bg{d}{1}{n}$}
\label{sec.DeBruijn}
In this section we consider $1$-constrained de Bruijn graphs, which correspond to the well-known de Bruijn graphs.  The domination number in the directed case has been studied  in \cite{Blazsik2002DominatingSI} proving that the exact value matches the lower bound in (\ref{eq:lower_bound_grado_diretto}). For the sake of completeness we recall the result in the next theorem.

\begin{theorem}
\label{theo:bruijn_directed}
\cite{Blazsik2002DominatingSI}
For any two integers $d\geq 2$ and $n\geq 2$ it holds
$$
\gamma(\dbg{d}{1}{n})=\left \lceil{\frac{d^n}{d+1}}\right \rceil.
$$
\end{theorem}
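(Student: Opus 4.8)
The plan is to establish the two bounds separately. The lower bound requires no work beyond specialising Fact~\ref{fact:lower_degree}: for $t=1$ we have $|V(d,1,n)|=d^n$ and $\Delta(\dbg{d}{1}{n})=d$, so inequality~(\ref{eq:lower_bound_grado_diretto}) already yields $\gamma(\dbg{d}{1}{n})\ge\left\lceil d^n/(d+1)\right\rceil$.

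For the matching upper bound I would exhibit an explicit dominating set by means of an integer relabelling. Relabel $\Sigma$ as $\{0,1,\dots,d-1\}$ and identify the vertex $u=u_1\cdots u_n$ with $N(u)=\sum_{i=1}^{n}u_i\,d^{n-i}\in\mathbb{Z}_{d^n}$. A one-line computation shows that $v\in N^{+}(u)$ iff $N(v)\equiv dN(u)+c\pmod{d^n}$ for some $c\in\{0,\dots,d-1\}$; hence $N^{+}[u]$ consists of $N(u)$ together with the block of $d$ consecutive residues $\{dN(u),dN(u)+1,\dots,dN(u)+d-1\}$ taken modulo $d^n$. In this coordinate, dominating $\dbg{d}{1}{n}$ amounts to covering $\mathbb{Z}_{d^n}$ by sets of the form ``a point plus a length-$d$ interval''.

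I would then put $k=\left\lceil d^n/(d+1)\right\rceil$ (so that $d^n-kd\le k$ and $kd\le d^n$), use $\gcd(d-1,d^n)=1$ — which holds since $d\equiv 1\pmod{d-1}$ forces $d^n\equiv 1$ — to pick $a\in\mathbb{Z}_{d^n}$ with $(d-1)a\equiv-kd\pmod{d^n}$, equivalently $da\equiv a-kd$, and set $S=\{a,a+1,\dots,a+k-1\}$ (mod $d^n$). Consecutive values of $m$ give consecutive length-$d$ blocks, so $\bigcup_{m\in S}\{dm,\dots,dm+d-1\}$ telescopes into a single interval $[da,da+kd)=[a-kd,a)$ of length $kd$; its complement in $\mathbb{Z}_{d^n}$ is the interval $\{a,\dots,a+(d^n-kd)-1\}$, which lies inside $S$ because $d^n-kd\le k$. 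Therefore $\bigcup_{m\in S}N^{+}[m]$ contains this interval-union together with $S$ itself, hence equals $\mathbb{Z}_{d^n}$, and $S$ dominates $\dbg{d}{1}{n}$ with exactly $k$ vertices, giving the claimed equality.

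The only delicate part is the modular bookkeeping: one must check that the $k$ blocks genuinely merge into one arc of length $kd$ modulo $d^n$ (this uses $kd\le d^n$, with equality only when $n=2$, in which case the arc is the whole circle and domination is immediate) and that the complementary arc of length $d^n-kd$ is placed exactly within the block $S$ — both facts follow from the choice of $a$ and from $k(d+1)\ge d^n$. I do not expect a genuine obstacle here: once $N^{+}[u]$ has been rewritten as a point plus a short interval the construction is essentially forced, and what remains is elementary arithmetic modulo $d^n$.
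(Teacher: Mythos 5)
The paper does not prove Theorem~\ref{theo:bruijn_directed} at all: it is recalled from \cite{Blazsik2002DominatingSI} purely as a citation, so there is no in-paper argument to compare against. Your proposal is a correct, self-contained proof. The lower bound is exactly the specialisation of (\ref{eq:lower_bound_grado_diretto}) to $t=1$. For the upper bound, the identification $N(u)=\sum_i u_i d^{n-i}$ correctly turns $N^{+}[u]$ into $\{N(u)\}\cup\{dN(u),\dots,dN(u)+d-1\}\pmod{d^n}$, and the arithmetic all checks out: $\gcd(d-1,d^n)=1$ so the required $a$ with $da\equiv a-kd$ exists; the $k$ consecutive blocks do telescope into one arc of length $kd\le d^n$ (with $kd=d^n$ only at $n=2$, where the arc is everything); and the uncovered arc of length $d^n-kd\le k$ starting at $a$ sits inside $S$ precisely because $k(d+1)\ge d^n$. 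This interval-covering argument on $\mathbb{Z}_{d^n}$ is essentially the standard route taken in the cited literature for directed de Bruijn domination, so you have not diverged from the known proof so much as supplied the proof the paper omits.
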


On the other hand the undirected case is still open. Indeed, to the best of our knowledge only the perfect domination (\textit{i.e.} the case when each vertex is dominated by exactly one vertex) has been studied for undirected de Bruijn graphs \cite{Blazsik2002DominatingSI,Livingston1990}.

Here we provide lower and upper bounds on $\gamma(\bg{d}{1}{n})$ for any $n \geq 2$.  We start by considering the special cases $n=2$ and $n=3$ for which we provide formulas.

\begin{theorem}\label{theo:bruijn_n_2}
For any integer $d \geq 2$, $\gamma(\bg{d}{1}{2})=d-1$.
\end{theorem}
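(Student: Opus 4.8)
The plan is to identify $V(\bg{d}{1}{2})$ with the $d\times d$ grid $[d]^2$ and to use the explicit shape of closed neighbourhoods. Directly from the definition, for a vertex $s=(a,b)$ the set $N[s]$ consists of $s$ itself together with the whole ``row'' $\{(b,x):x\in[d]\}$ and the whole ``column'' $\{(x,a):x\in[d]\}$; so putting $(a,b)$ into a set covers precisely row $b$, column $a$ and the cell $(a,b)$. It is then convenient to attach to any vertex set $S$ the two sets $R=\{b:(a,b)\in S\text{ for some }a\}$ and $C=\{a:(a,b)\in S\text{ for some }b\}$; one checks that a cell $(r,c)$ is dominated by $S$ if and only if $r\in R$, or $c\in C$, or $(r,c)\in S$. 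Both inequalities will be read off this criterion.

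For the upper bound I would simply exhibit $S=\{(1,d)\}\cup\{(i,i):2\le i\le d-1\}$, which has $d-1$ vertices. Here the diagonal vertices supply $R\supseteq\{2,\dots,d-1\}$ and $C\supseteq\{2,\dots,d-1\}$, while $(1,d)$ adds $d$ to $R$ and $1$ to $C$; hence $R=\{2,\dots,d\}$ and $C=\{1,\dots,d-1\}$, so the unique cell lying in no covered row and no covered column is $(1,d)$ itself, which belongs to $S$. Thus $S$ dominates and $\gamma(\bg{d}{1}{2})\le d-1$.

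For the lower bound, let $S$ be a dominating set with $|S|=k$ and keep the sets $R,C$ (so $|R|\le k$, $|C|\le k$). With $\bar R=[d]\setminus R$ and $\bar C=[d]\setminus C$, the domination criterion forces $\bar R\times\bar C\subseteq S$. If $\bar R=\emptyset$ then $R=[d]$ needs $d$ distinct second coordinates, so $k\ge d$; symmetrically if $\bar C=\emptyset$. Otherwise put $p=|\bar R|\ge1$, $q=|\bar C|\ge1$. Since every element of $\bar C$ occurs as the second coordinate of some member of $\bar R\times\bar C\subseteq S$, we get $\bar C\subseteq R$, hence $R\setminus\bar C$ has $|R|-q=d-p-q$ elements; each of these must still occur as a second coordinate of some member of $S$ lying outside $\bar R\times\bar C$, and there are only $k-pq$ of those. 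This yields $k-pq\ge d-p-q$, i.e.\ $k\ge d-p-q+pq=(d-1)+(p-1)(q-1)\ge d-1$. Combining the two bounds gives $\gamma(\bg{d}{1}{2})=d-1$.

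The step I expect to be the real obstacle is the lower bound. The generic estimate (\ref{eq:grado}) only gives $\gamma\ge\lceil d^2/(2d)\rceil=\lceil d/2\rceil$, which is off by a factor of about $2$, so no pure counting or averaging argument over closed neighbourhoods can reach $d-1$; the crucial point is the rigidity statement $\bar R\times\bar C\subseteq S$, after which a one-line count of second coordinates turns the weak bound $2k\ge d$ into the tight $k\ge d-1$. One should also not forget to treat the degenerate cases $\bar R=\emptyset$ or $\bar C=\emptyset$ separately, since there the bound $k\ge d$ comes from a different (and easier) reason.
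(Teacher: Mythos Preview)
Your proof is correct and, for the lower bound, follows essentially the same route as the paper: with $P_1,P_2$ the sets of first and second coordinates used by $S$ and $M_1=[d]\setminus P_1$, $M_2=[d]\setminus P_2$, the paper observes (just as you do) that $M_2\times M_1\subseteq S$ and then counts first coordinates to obtain $|S|\ge d+(m_1-1)(m_2-1)-1\ge d-1$; your sets $\bar C,\bar R$ are exactly $M_1,M_2$ and your inequality $k\ge (d-1)+(p-1)(q-1)$ is the same one. The only difference is cosmetic: for the upper bound the paper uses $S=\{(1,x):x\ne 1\}$, whereas you use the (equally valid) near-diagonal set $\{(1,d)\}\cup\{(i,i):2\le i\le d-1\}$.
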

\begin{proof} 
{\em Upper bound. }
Observe that the vertices of $\bg{d}{1}{2}$ are all the sequences  $(x_1,x_2) \in [d]^2$. 
Define the subset $S=\{(1,x): x \in [d], x\neq 1\}$ of cardinality $d-1$. We show that $S$ is a dominating set. Let $(x_1,x_2) \in [d]^2$ be any of the vertices of $\bg{d}{1}{2}$. There are three possible cases to consider: (i) $x_2=1$, then any sequence of $S$ dominates it, (ii) $x_2\neq 1$ but $x_1=1$, then the sequence $(x_1,x_2)$ belongs to $S$ and thus is trivially dominated, (iii) $x_2\neq 1$ and $x_1\neq 1$,  then $(1,x_1)$ is in $S$ and dominates $(x_1,x_2)$. Thus, $\gamma(\bg{d}{1}{2})\leq d-1$.   

\smallskip

{\em Lower bound. }
The lower bound deduced from (\ref{eq:lower_bound_grado_nondiretto}) gives only $\left \lceil \frac{d^2}{2d+1} \right \rceil$ and thus we need to use a different argument to prove our claim. 

Let $S$ be a dominating set. Let $P_1 ,P_2$ the set of symbols that, in the sequences in $S$, appear  in the first and second coordinates, respectively. 
Let $M_1= [d] -P_1$ and $M_2= [d]- P_2$ be the sets of symbols that, in the sequences in $S$,  do not appear in the first and second coordinates, respectively. 
Observe that $M_1 \cap M_2 =\emptyset $, {\em i.e.} every symbol $a \in [d]$ appears in some sequence in $S$, otherwise it would not be possible to dominate sequence $(a,a)$ where $a \in M_1 \cap M_2$.
For $i=1,2$, let $|P_i|=p_i$ and $|M_i|=m_i$ and obviously  $p_1+m_1=p_2+m_2=d$. Notice that for any $i=1,2$, it holds $|S| \geq |P_i|$ and thus if $m_i=0$ we are done. 

Assume then $m_i \geq 1$ and observe that, for any two symbols $a \in M_1$ and $b\in M_2$, sequence $(b,a)$ must belong in $S$. 
Indeed the only sequences that can dominate $(b,a)$ are: \\ 
(i) sequences of the form $(x,b)$ which cannot be in $S$ as $b \in M_2$, \\
(ii) sequences of the form $(a,y)$ which cannot be in $S$ as $a \in M_1$, \\
(iii) sequence $(b,a)$ itself. 

Thus, there are $m_1 m_2$ pairs $(b,a)$ in $S$, with $a \in M_1$ and $b\in M_2$. Clearly these sequences have in the first position a symbol in $M_2$ and thus contribute with exactly $m_2$ symbols in $P_1$. 
Hence, the rest of the $p_1-m_2$ symbols in $P_1$ must appear as a first coordinate in $|S|-m_1 m_2$ sequences. 
So $p_1-m_2\leq |S| - m_1m_2$. 
Since $p_1=d-m_1$, we have:
$$|S| \geq d+ m_1 m_2 -m_1 - m_2= d +(m_1-1)(m_2-1) - 1 \geq d-1$$
\noindent
where the last inequality follows by observing that $m_1$ and  $m_2$ are both positive integers. 
\end{proof}

\begin{theorem}\label{theo:bruijn_n_3}
For any integer $d \geq 2$, $\gamma(\bg{d}{1}{3})=d \bigl\lceil\frac{d}{2}\bigr\rceil$.
\end{theorem}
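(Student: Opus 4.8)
\emph{The plan} is to prove the two inequalities separately: an explicit dominating set of size $d\lceil d/2\rceil$ for the upper bound, and a counting argument improving the degree bound (\ref{eq:lower_bound_grado_nondiretto}) for the lower bound.

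\emph{Upper bound.} Put $m=\lceil d/2\rceil$, $L=\{1,\dots,m\}$ and $H=\{m+1,\dots,d\}$, so that $|L|=m\geq \lfloor d/2\rfloor=|H|\geq 1$. Fix any surjection $\pi\colon L\to H$ and set
\[
S=\bigl\{\,(\pi(q),\,p,\,q)\ :\ p\in[d],\ q\in L\,\bigr\}.
\]
A triple in $S$ is determined by the pair $(p,q)$, so $|S|=dm=d\lceil d/2\rceil$. The set of last-two-coordinate pairs occurring in $S$ is exactly $[d]\times L$, and (since $\pi$ is onto $H$) the set of first-two-coordinate pairs is exactly $H\times[d]$. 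Take any vertex $(a,b,c)$ of $\bg{d}{1}{3}$. If $b\in L$ then $(a,b)\in[d]\times L$, so $(\pi(b),a,b)\in S$ and $(a,b,c)$ is one of its out-neighbours, hence dominated; if $b\in H$ then, writing $b=\pi(q)$ for some $q\in L$, we have $(\pi(q),c,q)\in S$ and $(a,b,c)$ is one of its in-neighbours, hence dominated. Thus $S$ dominates $\bg{d}{1}{3}$ and $\gamma(\bg{d}{1}{3})\leq d\lceil d/2\rceil$.

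\emph{Lower bound.} Let $S$ be any dominating set, and attach to it the pair-sets $\mathcal{A}=\{(y,z)\,:\,(x,y,z)\in S\text{ for some }x\}$ and $\mathcal{B}=\{(x,y)\,:\,(x,y,z)\in S\text{ for some }z\}$. The relevant elementary facts are: $(a,b,c)$ is dominated if and only if $(a,b)\in\mathcal{A}$, or $(b,c)\in\mathcal{B}$, or $(a,b,c)\in S$; consequently $|S|\geq\max(|\mathcal{A}|,|\mathcal{B}|)$, and, grouping vertices by their middle symbol $b$, the $\alpha_b\beta_b$ triples $(a,b,c)$ with $(a,b)\notin\mathcal{A}$ and $(b,c)\notin\mathcal{B}$ are all forced into $S$, where $\alpha_b=|\{a:(a,b)\notin\mathcal{A}\}|$ and $\beta_b=|\{c:(b,c)\notin\mathcal{B}\}|$. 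Crucially, such a forced triple itself contributes $(b,c)$ to $\mathcal{A}$ and $(a,b)$ to $\mathcal{B}$; hence, for every middle symbol $b$ with $\alpha_b,\beta_b\geq 1$, every $\mathcal{A}$-bad pair $(a,b)$ must lie in $\mathcal{B}$ and every $\mathcal{B}$-bad pair $(b,c)$ must lie in $\mathcal{A}$. Partitioning $[d]$ according to whether the layer $G_b=\{(a,b,c):a,c\in[d]\}$ is fully covered by $\mathcal{A}$ (i.e.\ $\alpha_b=0$), fully covered by $\mathcal{B}$ ($\beta_b=0$), or neither, and combining $|S|\geq\max(|\mathcal{A}|,|\mathcal{B}|)$, $|S|\geq\sum_b\alpha_b\beta_b$, and the realizability restriction just stated, should yield $|S|\geq d\lceil d/2\rceil$.

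I expect the lower bound to be the main obstacle. The naive use of the above (combining $|S|\geq\sum_b\alpha_b\beta_b$ with $|S|\geq d^2-\sum_b\alpha_b$ and $|S|\geq d^2-\sum_b\beta_b$) only yields a bound of order $d^3/(2d+1)$, which coincides with (\ref{eq:lower_bound_grado_nondiretto}) and is strictly weaker than $d\lceil d/2\rceil$ already for $d=3$ (where one needs $6$, not $4$). The extra strength must come from the realizability restriction above, and the delicate point is to exploit it tightly enough — in particular, to rule out the ``mixed'' profiles in which several middle symbols contribute very few forced triples — so that the final estimate is exactly $d\lceil d/2\rceil$ rather than merely $\Theta(d^2)$. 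It is prudent to verify the formula directly for the small cases $d=2$ (where it also matches the degree bound) and $d=3$.
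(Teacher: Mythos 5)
Your upper bound is correct and is essentially the paper's construction in a mildly more abstract dress: the paper takes $S=\{(2i,x,2i-1)\}$ (plus $\{(d,x,d):x\in[d]\}$ when $d$ is odd), which is exactly your $S=\{(\pi(q),p,q)\}$ with $L$ the odd symbols, $H$ the even ones and $\pi(2i-1)=2i$ (the odd-$d$ patch corresponds to one element of $H$ having two preimages, just as in your surjection). The verification --- dominate $(a,b,c)$ by an in-neighbour or an out-neighbour according to which half the middle symbol $b$ lies in --- is the same in both versions, and your count $|S|=d\lceil d/2\rceil$ is right.

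The lower bound, however, is not a proof but a plan with an acknowledged hole, and the hole sits exactly where the difficulty lives. Your setup of $\mathcal{A}$, $\mathcal{B}$, the deficiencies $\alpha_b,\beta_b$ and the forced triples is sound, and you correctly observe that $\max\bigl(|\mathcal{A}|,|\mathcal{B}|,\sum_b\alpha_b\beta_b\bigr)$ alone cannot beat (\ref{eq:lower_bound_grado_nondiretto}); but the sentence ``combining \dots should yield $|S|\geq d\lceil d/2\rceil$'' is precisely the step you have not supplied, and you say yourself that you do not know how to rule out the mixed profiles. As written, this half establishes nothing beyond the degree bound. The paper closes the gap with a local rather than global argument, which is the idea your profile optimization is missing: let $S_a$ be the triples of $S$ with middle symbol $a$. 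If $|S_a|\geq\lceil d/2\rceil$ for every $a$, summing over $a$ finishes the proof. Otherwise fix a deficient $a$; since $(a,a,a)$ can only be dominated by triples of the form $(z,a,a)$, $(a,a,z)$ or itself, the pair $(a,a)$ must occur among the at most $\lceil d/2\rceil-1$ prefix pairs $(x,a)$ of $S_a$ (up to symmetry), so there remain $m\geq\lceil d/2\rceil$ pairs $(x_i,a)$ with $x_i\neq a$ occurring in no triple $(x_i,a,\cdot)$ of $S$. For each such $x_i$ and each $z\in[d]$, the vertex $(z,x_i,a)$ must then be dominated either by itself or by an in-neighbour $(b,z,x_i)$; these $d$ witnesses are pairwise distinct for fixed $i$ and disjoint across different $i$, whence $|S|\geq dm\geq d\lceil d/2\rceil$. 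Until you either carry out this single-deficient-symbol argument or make your realizability restriction quantitative, the lower bound remains unproved.
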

\begin{proof} 
Also in this case we handle separately upper and lower bounds.

\smallskip

{\em Upper bound. }
Assume first $d$ even and let  $S=\bigl\{(2i,x, 2i-1): 1\leq i\leq \frac{d}{2}, x \in [d]\bigr\}$.
Clearly,  $|S|=\frac{d^2}{2}$. To show that $S$ is a dominating set observe that, given any sequence $(a,b,c) \in [d]^3$, either  $b$ is even (and then it is dominated by sequence $(b,c,b-1) \in S$), or $b$ is odd (and then it is dominated by sequence $(b+1,a,b) \in S$). 

When $d$ is odd the argument is the same but in this case we consider set  $S=\bigl\{(2i,x, 2i-1): 1\leq i\leq \frac{d-1}{2}, x \in [d]\bigr\} \cup \bigl\{(d,x,d): x\in [d] \bigr\}$. 
Clearly, $|S|=d \bigl\lceil\frac{d}{2}\bigr\rceil $.
We now prove  that $S$ is a dominating set. First, from the previous paragraph it is clear that $S$ dominates all sequences $(a,b,c)$ that do not contain $d$. 
If $(a,b,c)$ contains at least  two symbols equal to $d$, then sequences  $(d,d,x)$ and $(x,d,d)$ are dominated by $(d,d,d) \in S$ since all the sequences $(d,x,d)$ are in $S$. 
If, finally, $(a,b,c)$ contains only one $d$, we distinguish three cases according to the position where $d$ appears: \\
(i) $(a,b,d)$ is dominated by $(b,d,b-1)$ when $b$ is even or by $(b+1,a,b)$ when $b$ is odd; \\
(ii) $(a,d,c)$ is dominated by $(d,c,d)$; \\
(iii) $(d,b,c)$ is dominated by $(b+1,d,b)$ when $b$ is odd or by $(b,c,b-1)$ when $b$ is even. 

\smallskip

{\em Lower bound. } The lower bound deduced from  (\ref{eq:lower_bound_grado_nondiretto})  gives only $\bigl\lceil\frac{d^3}{2d+1}\bigr\rceil\leq d\bigl\lceil\frac{d}{2}\bigr\rceil -\bigl\lfloor\frac{d}{4}\bigr\rfloor+1$ and thus we need to use a different argument to prove our claim. 
Let $S$ be a dominating set and for any symbol $a$ we denote by $S_a$ the sequences of $S$ of the form $(x,a,y)$ (\textit{i.e.} those that have the symbol $a$ in the second position). Clearly, if for every symbol $a$ it holds that $|S_a| \geq  \bigl\lceil\frac{d}{2}\bigr\rceil $ we are done as $|S|=\sum_{a \in [d]}|S_a| \geq d \bigl\lceil\frac{d}{2}\bigr\rceil$. Then assume there exists a symbol $a$ for which $|S_a|\leq \bigl\lceil\frac{d}{2}\bigr\rceil -1$. We denote by $P^{1}_a$ the set of pairs $(x,a)$ for which there exists a sequence $(x,a,y)$ in $S_a$. Similarly we denote by $P^{2}_a$ the set of pairs $(a,y)$ for which there exists a sequence $(x,a,y)$ in $S_a$. 
By definition of  $P^{1}_a$ and  $P^{2}_a$ we have that  $|P^{1}_a| \leq |S_a| \leq \bigl\lceil\frac{d}{2}\bigr\rceil -1$ and $|P^{2}_a| \leq |S_a| \leq \bigl\lceil\frac{d}{2}\bigr\rceil -1$. Now, we show that it must necessarily hold that $(a,a) \in  P^{1}_a \cup  P^{2}_a$. Indeed, the triple $(a,a,a)$ can be dominated only by the following triples: $(a,a,a)$, $(z,a,a)$, $(a,a,z)$ with $z \in [d]$. Thus, without loss of generality, we can assume $(a,a) \in  P^{1}_a$ (the proof is identical if $(a,a) \in  P^{2}_a$).

Let $M$ be the set of the pairs $(x,a)$ that do not appear in the first two positions  of the sequences in $S$.  We have $|M| = d- | P^{1}_a| \geq d - \bigl\lceil\frac{d}{2}\bigr\rceil +1 \geq \bigl\lceil\frac{d}{2}\bigr\rceil$.  We denote the pairs in $M$ as $(x_1,a), (x_2,a), \ldots, (x_m,a)$ with $m\geq \bigl\lceil\frac{d}{2}\bigr\rceil$. Notice that $(a,a) \not\in M$, thus for any $1\leq i\leq m$, $x_i \neq a$.

Fix an $1\leq i\leq m$ and consider sequences $(z,x_i,a)$ for every $z\in [d]$.  
As these sequences are dominated by $S$, for each $z$, at least one between $(z,x_i,a)$ and $(b,z,x_i)$ belongs to $S$ (note that $(x_i,a,c)$ does not belong to $S$ as pair $(x_i,a) \in M$). Moreover as $x_i\neq a$, these sequences are all distinct.
We deduce that, for any pair  $(x_i,a)$, there are $d$ different triples (one for each $z$) that belong to $S$. 
This holds for any $x_i$ with $1\leq i\leq m$. 
Moreover for any $i\neq j$ and symbols $z, z', b, b'$ not necessary distinct among them, it holds $\{(z,x_i,a),(b,z,x_i)\} \cap \{(z',x_j,a),(b',z',x_j)\}=\emptyset$. 
Thus, $|S| \geq dm \geq d \bigl\lceil\frac{d}{2}\bigr\rceil$.
\end{proof}

We now prove  a general result for any $n\geq 4$.  

\begin{theorem}\label{theo:deBruij}
For any two integers $d\geq 2$ and $n\geq 4$ it holds:
$$
\frac{d^{n}}{2d+1}  \leq \gamma(\bg{d}{1}{n})\leq (d-1)  d^{n-2}= \bigg( 2 - \Theta\Big(\frac{1}{d}\Big) \bigg) \frac{d^{n}}{2d+1}.$$
\end{theorem}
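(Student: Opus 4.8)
The plan is to treat the two bounds independently; the lower bound and the asymptotic identity are immediate, so essentially all the work lies in the upper bound. For the lower bound I would simply invoke inequality~(\ref{eq:lower_bound_grado_nondiretto}) of Fact~\ref{fact:lower_degree} with $t=1$, which reads $\gamma(\bg{d}{1}{n}) \geq \left\lceil \frac{d!\,d^{n-1}}{(d-1)!\,(2d+1)}\right\rceil = \left\lceil\frac{d^n}{2d+1}\right\rceil \geq \frac{d^n}{2d+1}$; and the identity $(d-1)d^{n-2} = \bigl(2-\Theta(\tfrac1d)\bigr)\frac{d^n}{2d+1}$ is the one-line computation $\frac{(d-1)d^{n-2}}{d^n/(2d+1)} = \frac{(d-1)(2d+1)}{d^2} = 2-\frac1d-\frac1{d^2}$.

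For the upper bound I would exhibit an explicit dominating set, namely $S = \{(x_1,\dots,x_n)\in[d]^n : x_1 = x_{n-1},\ x_2 \neq x_n\}$. First I would count it: among the $d^{n-1}$ sequences with $x_1=x_{n-1}$, a fraction $\frac{d-1}{d}$ also satisfies $x_2\neq x_n$, so $|S| = (d-1)d^{n-2}$. Then I would verify it dominates, using that in $\bg{d}{1}{n}$ a vertex $(v_1,\dots,v_n)$ is adjacent exactly to the vertices $(v_2,\dots,v_n,b)$ and $(b,v_1,\dots,v_{n-1})$, $b\in[d]$ (loops aside). For an arbitrary vertex $y=(y_1,\dots,y_n)$: (i) if $y_1\neq y_{n-1}$, then $v:=(y_{n-2},y_1,y_2,\dots,y_{n-1})$ lies in $S$ (its first and $(n-1)$-th entries are both $y_{n-2}$, its second and last are $y_1\neq y_{n-1}$) and $y=(v_2,\dots,v_n,y_n)$ is one of its out-neighbours; (ii) if $y_1=y_{n-1}$ and $y_2=y_n$, then for any $c\neq y_3$ the vertex $w:=(y_2,y_3,\dots,y_n,c)$ lies in $S$ (its first and $(n-1)$-th entries are $y_2=y_n$, its second and last are $y_3\neq c$) and $y$ is one of its in-neighbours; (iii) if $y_1=y_{n-1}$ and $y_2\neq y_n$, then $y\in S$. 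Since these cases exhaust $[d]^n$, $S$ is dominating, giving $\gamma(\bg{d}{1}{n})\leq (d-1)d^{n-2}$.

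The crux --- and the only genuinely delicate point --- is choosing the two linear relations defining $S$ so that the case analysis closes with nothing left over: the ``failure event'' of out-neighbour domination must coincide with the equality $x_1=x_{n-1}$, and the ``failure event'' of in-neighbour domination must coincide with that inequality being an equality, $x_2=x_n$. I would locate these relations by the following bookkeeping: for a general candidate $S=\{x_i=x_j,\ x_k\neq x_l\}$, work out for which $y$ there is an out-neighbour (resp.\ in-neighbour) of $y$ in $S$ --- noting that if $i=1$ the equality $x_i=x_j$ becomes ``free'' for the out-neighbour and if $l=n$ the inequality becomes ``free'' for the in-neighbour --- and then impose that the two residual failure conditions match the constraints of $S$ after the index shift induced by a de Bruijn edge. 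This forces $(i,j,k,l)=(1,n-1,2,n)$. I expect this hunt for the correct $S$ to be the main obstacle; once $S$ is in hand, the verification above is routine. I would also remark why the obvious alternative fails: deleting a $\tfrac1d$-fraction of a trivial size-$d^{n-1}$ dominating set (one vertex per length-$(n-1)$ prefix, e.g.\ $\{x_1=x_n\}$) always strands a set of $\Theta(d^{n-2})$ vertices that cannot be re-dominated without adding back comparably many vertices.
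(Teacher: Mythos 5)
Your proof is correct and follows essentially the same approach as the paper: an explicit dominating set of size $(d-1)d^{n-2}$ cut out by one equality and one inequality among the coordinates $1,2,n-1,n$, verified by a short case analysis on in-/out-neighbours, with the lower bound and the asymptotic identity handled exactly as in the paper. The only difference is cosmetic --- the paper takes $S=\{x_{n-1}\neq x_1,\ x_n=x_2\}$, i.e.\ the roles of the equality and the inequality are swapped between the two pairs of positions relative to your choice.
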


\begin{proof}
{\em Lower bound. } The lower bound follows from  (\ref{eq:lower_bound_grado_nondiretto}). 

{\em Upper bound. } To prove the upper bound, consider the following set:
$$
S = \bigl\{(x_1, x_2, \ldots, x_{n}) \in [d]^n: x_{n-1}\neq x_1,  x_{n}=x_2   \bigr\}.
$$

Notice that $S$ is well defined since $n \geq 4$. Clearly, the cardinality of $S$ is $(d-1)d^{n-2}$.  Indeed, once $x_1, x_2,\ldots, x_{n-2}$ are fixed, there are $d-1$ possibilities for $x_{n-1}$ and one possibility for $x_n$.  
We show  that $S$ is a dominating set for $\bg{d}{1}{n}$.  Suppose, by contradiction, that there is a sequence $\textbf{s}=(x_1, x_2, \ldots, x_{n-1}, x_{n})$ that is not dominated by any vertex in $S$. Then $\textbf{s} \not \in S$ and both the following conditions must occur:

\begin{enumerate}
    \item For any $y \in [d]$, sequence $\textbf{s}'=(y, x_1, x_2, \ldots, x_{n-2}, x_{n-1}) \not\in S$, otherwise $\textbf{s}'$ dominates $\textbf{s}$. In particular, the sequence $(y, x_1, x_2, \ldots, x_{n-2}, x_{n-1}) \not\in S$ for $y\neq x_{n-2}$. By the definition of $S$, we must necessarily have $x_1\neq x_{n-1}$.
  
    \item For any symbol $y \in [d]$, sequence $\textbf{s}''=(x_2, x_3, \ldots, x_{n-1}, x_n, y) \not\in S$, otherwise $\textbf{s}''$ would dominate $\textbf{s}$. In particular if we set $y=x_3$, the sequence $(x_2, x_3, \ldots, x_n, x_3) \not\in S$. By definition of $S$, we must necessarily have $x_2= x_{n}$.
\end{enumerate}
From items 1. and 2. we have that $x_1\neq x_{n-1}$ and $x_2= x_{n}$ and thus  $\textbf{s}$ must have the form $(x_1, x_2, \ldots, z, x_2)$ with $z\neq x_1 $. Thus, $\textbf{s}\in S$ by the definition of $S$, contradicting our initial hypothesis.
\end{proof}

\paragraph*{Comparison with the directed case.}
We conclude this section by observing that a dominating set for a directed graph is {\em a fortiori} also a dominating set for its underlying undirected graph. 
Thus, $\gamma(\bg{d}{1}{n})\leq \gamma(\dbg{d}{1}{n})$. However,  in this section we improved this bound.  
Namely, for  $n=2$ and $n=3$, the upper bound deduced from  Theorem~\ref{theo:bruijn_directed} is not tight as shown by our formulas for $\gamma(\bg{d}{1}{2})$ and $\gamma(\bg{d}{1}{3})$.
For  $n\geq 4$ the upper bound provided by studying the directed case is $\left\lceil\frac{d^{n}}{d+1}\right\rceil$ and we improved it in  Theorem~\ref{theo:deBruij} to $\Big \lceil \big(1-\frac{1}{d^2}\big) \frac{d^n}{d+1}\Big \rceil$.

\section{Domination numbers of $\dbg{d}{2}{n}$ and $\bg{d}{2}{n}$}\label{sec:kautz}

Here we consider $2$-constrained de Bruijn graphs which correspond to Kautz graphs. As for the de Bruijn graphs the  domination number of directed Kautz graphs has been exactly determined in  \cite{KIKUCHI2003} as stated by the following theorem

\begin{theorem}\label{theo:kunz_directed}\cite{KIKUCHI2003}
For any two integers $d\geq 2$ and $n\geq 1$ it holds:
$$\gamma(\dbg{d}{2}{n})=(d-1)^{n-1}.$$
\end{theorem}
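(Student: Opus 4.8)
The plan is to match the degree lower bound of Fact~\ref{fact:lower_degree} with an explicit (in fact efficient) dominating set, so the theorem reduces to a construction. First I would record that the lower bound is already available: specializing (\ref{eq:lower_bound_grado_diretto}) to $t=2$ gives $\gamma(\dbg{d}{2}{n}) \geq \frac{d!}{d!}(d-1)^{n-1} = (d-1)^{n-1}$, and by the remark following Fact~\ref{fact:lower_degree} this quantity is already an integer, so nothing is lost to the ceiling. Throughout I work with $n\geq 2$ as in the Preliminaries. It remains to produce a dominating set $S$ of $\dbg{d}{2}{n}$ with $|S| = (d-1)^{n-1}$.

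For the upper bound I would take
$$S = \bigl\{(x_1,\ldots,x_n) \in V(d,2,n) : x_1 = 1\bigr\},$$
the $2$-constrained length-$n$ sequences over $[d]$ that start with the symbol $1$. Since being $2$-constrained here just means that no two consecutive symbols coincide, once $x_1=1$ is fixed each subsequent symbol has exactly $d-1$ admissible values, so $|S| = (d-1)^{n-1}$, matching the lower bound. The key step is to check that $S$ dominates every vertex $v=(v_1,\ldots,v_n)$. If $v_1 = 1$ then $v \in S$ and we are done. If $v_1 \neq 1$, I would point to $u = (1, v_1, v_2, \ldots, v_{n-1})$: first verify that $u$ is a vertex, i.e.\ that it is $2$-constrained — the block $(v_1,\ldots,v_{n-1})$ is a contiguous sub-sequence of the $2$-constrained sequence $v$ and hence $2$-constrained, and prepending $1$ creates only the new consecutive pair $(1,v_1)$, which is legal precisely because $v_1\neq 1$ — and then observe that $(u,v)$ is an edge of the directed de Bruijn graph, since shifting $u$ one place to the left and appending $v_n$ yields exactly $v$ (legal because $v_{n-1}\neq v_n$, $v$ being $2$-constrained). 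Thus $u\in S$ dominates $v$, and together with the lower bound this gives $\gamma(\dbg{d}{2}{n}) = (d-1)^{n-1}$.

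As a sanity check, and to explain why $(d-1)^{n-1}$ is exactly right, I would remark that this $S$ is an \emph{efficient} dominating set: for $n\geq 2$ no vertex carries a loop, so each vertex of $S$ has out-degree $d-1$ and dominates exactly $d$ vertices, and $d\cdot(d-1)^{n-1} = d(d-1)^{n-1} = |V(d,2,n)|$ by (\ref{eq:vertices}), which forces every vertex to be dominated exactly once. There is no serious obstacle in this argument; the only point requiring care is the verification that the prepended sequence $u = (1,v_1,\ldots,v_{n-1})$ stays $2$-constrained, and that is exactly the place where the case hypothesis $v_1\neq 1$ is used, so I would isolate it explicitly. (If one preferred to reproduce the route of the cited reference, the work would instead go into an inductive argument exploiting that $\dbg{d}{2}{n}$ is the line digraph of $\dbg{d}{2}{n-1}$, but the explicit set above makes that unnecessary.)
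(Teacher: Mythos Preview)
Your proof is correct. The paper does not give a standalone proof of this theorem---it is quoted from \cite{KIKUCHI2003} with only the remark that the lower bound in (\ref{eq:lower_bound_grado_diretto}) is tight---but the paper does effectively re-prove it later: Theorem~\ref{th.directed_dtn} specialized to $t=2$ yields exactly $(d-1)^{n-1}$ for both bounds (the paper says so explicitly just after that proof), and the dominating set used there is $S=S_1=\{\textbf{x}\in V(d,2,n):x_1=1\}$, the very set you chose. So your argument is essentially the paper's own general construction instantiated at $t=2$.
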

\noindent 
Notice that the lower bound in (\ref{eq:lower_bound_grado_diretto}) for $\gamma(\dbg{d}{2}{n})$ is tight.

We consider now the undirected case and similarly to the previous section,  we start by proving exact results for $n=2$.
 
\begin{theorem}\label{theo:kautz_n_2}
For any integer $d \geq 2$,
$\gamma(\bg{d}{2}{2})=d-1.$
\end{theorem}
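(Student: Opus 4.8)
The plan is to establish the equality $\gamma(\bg{d}{2}{2}) = d-1$ by proving matching upper and lower bounds, mirroring the structure used for $\bg{d}{1}{2}$ in Theorem~\ref{theo:bruijn_n_2} but adapting to the Kautz constraint that vertices are $2$-constrained sequences, i.e.\ pairs $(x_1,x_2)\in[d]^2$ with $x_1\neq x_2$. For the upper bound, I would exhibit an explicit dominating set of size $d-1$. The natural candidate is $S=\{(1,x): x\in[d],\ x\neq 1\}$, exactly as in the $1$-constrained case, and I would check that each vertex $(x_1,x_2)$ with $x_1\neq x_2$ is dominated: if $x_2=1$ then any element $(1,x)$ of $S$ is adjacent to $(1,x_2)$ through the de Bruijn edge $(1,x)\to(x,1)$ (valid since $x\neq 1$), wait — one must be slightly careful since the graph is undirected, so I would verify adjacency via either orientation of the de Bruijn edge; the remaining cases are $x_1=1$ (then $(x_1,x_2)\in S$) and $x_1\neq 1, x_2\neq 1$ (then $(1,x_1)\in S$ dominates $(x_1,x_2)$ via the edge $(1,x_1)\to(x_1,x_2)$). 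This gives $\gamma(\bg{d}{2}{2})\le d-1$.

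For the lower bound, the degree bound~(\ref{eq:lower_bound_grado_nondiretto}) is again too weak, so I would run a counting argument analogous to the one in Theorem~\ref{theo:bruijn_n_2}. Let $S$ be a dominating set, and let $P_1,P_2\subseteq[d]$ be the sets of symbols appearing in the first, resp.\ second, coordinate of sequences in $S$, with $M_1=[d]\setminus P_1$ and $M_2=[d]\setminus P_2$. The key structural observations are: first, $M_1\cap M_2=\emptyset$, because a symbol $a$ absent from both coordinates could not dominate any vertex containing $a$ in a "corner" position — concretely one picks a vertex that only $a$-containing sequences could dominate and derives a contradiction (here the $2$-constrained condition forces a small case check, e.g.\ using vertices like $(a,b)$ and $(b,a)$ for a fixed $b\neq a$). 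Second, for $a\in M_1$ and $b\in M_2$ with $a\neq b$, the vertex $(b,a)$ (which is a legal vertex since $a\neq b$) must itself lie in $S$: the only sequences that can dominate $(b,a)$ in the undirected Kautz graph are those of the form $(x,b)$, $(a,y)$, or $(b,a)$ itself, and the first two are excluded by $b\in M_2$ and $a\in M_1$. Then I would count: the $m_1 m_2$ or so forced pairs $(b,a)$ contribute only symbols of $M_2$ to the first coordinate, so the remaining symbols of $P_1$ must be supplied by other sequences, yielding $|S|\ge d+(m_1-1)(m_2-1)-1\ge d-1$, with an adjustment for the possibly missing diagonal pair $(a,a)$ which does not exist as a vertex.

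The main obstacle I anticipate is exactly this bookkeeping around the nonexistence of diagonal pairs $(a,a)$: in the $1$-constrained proof the argument crucially uses the vertex $(a,a)$ to force $M_1\cap M_2=\emptyset$, and uses all $m_1m_2$ pairs $(b,a)$ including possibly $a=b$, but in the Kautz graph these diagonal vertices are simply absent. So I would need to re-examine which vertex witnesses the emptiness of $M_1\cap M_2$ (picking instead an off-diagonal vertex $(a,c)$ or $(c,a)$ for a suitable $c$ and tracing through who can dominate it), and I would have to argue that losing at most one pair from the count of $m_1m_2$ forced pairs — or equivalently a small additive correction — still leaves $|S|\ge d-1$. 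Since $(m_1-1)(m_2-1)\ge 0$ with room to spare when $m_1,m_2\ge 1$ (and the case $m_i=0$ gives $|S|\ge|P_i|=d$ immediately), I expect the slack to absorb this correction comfortably, so the bound $d-1$ still goes through; this should be checked carefully but is not expected to be deep.
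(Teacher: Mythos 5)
Your upper bound is fine (it is the paper's: the set $S=\{(1,x):x\neq 1\}$ from Theorem~\ref{theo:bruijn_n_2} consists of $2$-constrained sequences and dominates the induced subgraph), and your counting argument for the lower bound is sound \emph{in the case} $M_1\cap M_2=\emptyset$ — there, in fact, no diagonal pair $(a,a)$ ever arises among the forced sequences $(b,a)$ with $a\in M_1$, $b\in M_2$, so the ``small additive correction'' you worry about is not needed and the bound $|S|\ge d+(m_1-1)(m_2-1)-1\ge d-1$ goes through verbatim. The genuine gap is your first structural claim: $M_1\cap M_2=\emptyset$ is \emph{false} for $\bg{d}{2}{2}$, and the proposed proof of it cannot be repaired, because there is no vertex that only $a$-containing sequences can dominate. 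Indeed, $(a,b)$ is dominated by any $(b,y)\in S$ and $(b,a)$ by any $(x,b)\in S$, neither of which contains $a$. Concretely, for $d=3$ the set $\{(1,2),(2,1)\}$ is a minimum dominating set of $\bg{3}{2}{2}$ in which the symbol $3$ appears in no coordinate, so $3\in M_1\cap M_2$. (What \emph{is} true is $|M_1\cap M_2|\le 1$: two distinct absent symbols $a,b$ would leave $(a,b)$ undominated.)

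The fix is to treat $M_1\cap M_2=\{a\}$ as a separate, easy case rather than excluding it: for every $x\neq a$ the vertex $(a,x)$ can only be dominated by a sequence of the form $(x,y)$ (since $(a,x)\notin S$ and no $(z,a)$ is in $S$), so every $x\neq a$ lies in $P_1$ and $|S|\ge|P_1|\ge d-1$. This is exactly the route the paper takes: it sets $C=M_1\cap M_2$, rules out $|C|\ge 2$, handles $|C|=1$ as above, and for $|C|=0$ observes that $S$ then also dominates every diagonal vertex $(a,a)$ of $\bg{d}{1}{2}$, reducing to Theorem~\ref{theo:bruijn_n_2} instead of re-running the $(P_i,M_i)$ count. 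So your plan becomes correct once you replace the false emptiness claim with this extra case; without it, the argument as written does not cover dominating sets that omit a symbol entirely, which — as the $d=3$ example shows — includes some minimum dominating sets.
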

\begin{proof} 
{\em Upper bound. } The upper bound follows from Theorem \ref{theo:bruijn_n_2} observing that a dominating set for $\bg{d}{1}{2}$ is also a dominating set for $\bg{d}{2}{2}$. 

\smallskip

\noindent
{\em Lower bound. } Concerning the lower bound, let $S$ be a dominating set for $\bg{d}{2}{2}$. Let $C$ be the set of symbols of $\Sigma$ that do not appear in any sequence in $S$. Then necessarily $|C|< 2$; indeed, if by contradiction there were $a, b \in C$, with $a\neq b$, $S$ could not dominate the vertex corresponding to the sequence $(a,b)$, contradicting the hypothesis of $S$ being a dominating set. 

If $|C|=1$, then let $C=\{a\}$.
Then, for any $x\neq a$, $S$ must dominate both $(a,x)$ and $(x,a)$, and so there must exists $y_1, y_2 \in \Sigma$ such that $(x,y_1)$ and $(y_2,x)$ are both in $S$.
So every symbol $x\neq a$ must appear both in the first and in the second position in $S$, and thus $|S|\geq d-1$. 

Finally, if $|C|=0$, for any $a\in \Sigma$ there is a sequence between $(a,x)$ and $(x,a)$ in $S$ for some $x\neq a$. 
This sequence dominates also $(a,a)$ and thus $S$ is also a dominating set for $\bg{d}{1}{2}$.  Thus, from Theorem~\ref{theo:bruijn_n_2} we have $|S| \geq \gamma(\bg{d}{1}{2}) =d-1$ and this concludes the proof.
\end{proof}

\begin{theorem}\label{theo:kautz_n_3}
For any integer $d \geq 2$,
$$
\frac{d(d-1)}{2}\leq \gamma(\bg{d}{2}{3}) \leq \biggl\lfloor  \frac{d^2}{2} \biggr\rfloor .
$$

\end{theorem}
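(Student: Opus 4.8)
As with the earlier results of this section, I would handle the lower and the upper bound separately.

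\emph{Lower bound.} The estimate (\ref{eq:lower_bound_grado_nondiretto}) here yields only about $\frac{d(d-1)^2}{2d-1}$, which falls just short of $\frac{d(d-1)}{2}$, so a direct counting argument on a well-chosen family of vertices is called for. Let $\Pi=\{(a,b,a)\ :\ a,b\in[d],\ a\ne b\}$; each such triple is $2$-constrained and $|\Pi|=d(d-1)$. The key point is that the closed neighbourhood of \emph{every} vertex $v=(p,q,r)$ of $\bg{d}{2}{3}$ meets $\Pi$ in at most two vertices. Indeed, the out-neighbours of $v$ are the triples $(q,r,s)$ with $s\ne r$, and among these only $(q,r,q)$ lies in $\Pi$; the in-neighbours are the triples $(s,p,q)$ with $s\ne p$, and among these only $(q,p,q)$ lies in $\Pi$; finally $v$ itself lies in $\Pi$ only when $p=r$, and then $v=(p,q,p)$ while $(q,r,q)=(q,p,q)$, so only two distinct members of $\Pi$ occur (and when $p\ne r$ the triples $(q,r,q)$ and $(q,p,q)$ are distinct, so exactly two occur). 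Hence for any dominating set $S$ one gets $d(d-1)=|\Pi|\le\sum_{v\in S}|N[v]\cap\Pi|\le 2|S|$, i.e. $\gamma(\bg{d}{2}{3})\ge\frac{d(d-1)}{2}$.

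\emph{Upper bound.} I would first record the following reformulation of domination: a $2$-constrained triple $(a,b,c)$ is dominated by $S$ if and only if $(a,b,c)\in S$, or some $(b,c,s)\in S$, or some $(s,a,b)\in S$; equivalently, if and only if $(a,b,c)\in S$, or the pair $(b,c)$ occurs as the first two coordinates of some member of $S$, or the pair $(a,b)$ occurs as the last two coordinates of some member of $S$. Fix a partition $[d]=V_B\sqcup V_A$ with $|V_B|=\lfloor d/2\rfloor$ and $|V_A|=\lceil d/2\rceil$. If a set $S$ has the property that every $2$-constrained pair $(x,y)$ with $x\in V_B$ occurs as the first two coordinates of some member of $S$, and every $2$-constrained pair $(y,z)$ with $z\in V_A$ occurs as the last two coordinates of some member of $S$, then every triple $(a,b,c)$ is dominated: if $b\in V_B$ then $(b,c)$ is such a ``first pair'', and if $b\in V_A$ then $(a,b)$ is such a ``last pair''. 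I would build such an $S$ by grouping its vertices according to their middle symbol: for each $y\in V_B$, include for every $z\in V_A$ a vertex $(x_z,y,z)$, choosing the symbols $x_z\in[d]\setminus\{y\}$ so that $\{x_z:z\in V_A\}\supseteq V_B\setminus\{y\}$ (possible since $|V_A|\ge|V_B|-1$); for each $y\in V_A$, include for every $x\in V_B$ a vertex $(x,y,z_x)$, choosing $z_x\in[d]\setminus\{y\}$ so that $\{z_x:x\in V_B\}\supseteq V_A\setminus\{y\}$ (possible since $|V_B|\ge|V_A|-1$). All these vertices are $2$-constrained, and one checks that their first pairs cover every pair $(x,y)$ with $x\in V_B$ and their last pairs cover every pair $(y,z)$ with $z\in V_A$. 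The number of vertices used is $|V_B|\cdot|V_A|+|V_A|\cdot|V_B|=2\lfloor d/2\rfloor\lceil d/2\rceil=\lfloor d^2/2\rfloor$, which gives $\gamma(\bg{d}{2}{3})\le\lfloor d^2/2\rfloor$.

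I expect the upper bound to be where the real work lies: correctly setting up the first-pair/last-pair reformulation, maintaining the $2$-constrained condition while selecting the auxiliary symbols $x_z$ and $z_x$, and then verifying that every required pair is actually covered and that $2\lfloor d/2\rfloor\lceil d/2\rceil=\lfloor d^2/2\rfloor$. Once one hits on the palindromic family $\Pi$, the lower bound is essentially a one-paragraph double count.
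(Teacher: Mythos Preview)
Your proof is correct. The lower bound via the palindromic family $\Pi=\{(a,b,a)\}$ and the double count ``each vertex covers at most two palindromes'' is exactly the paper's argument.

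For the upper bound the paper uses the same first-pair/last-pair reformulation you state, but realises it concretely with the even/odd partition of $[d]$: it verifies property $\mathcal{P}_1$ (every pair $(x,y)$ with $x$ even is a first pair of some member of $S$) and $\mathcal{P}_2$ (every pair $(x,y)$ with $y$ odd is a last pair), writing down explicit sets $S_1,S_2,S_3$ when $d$ is even and a modified four-set construction when $d$ is odd. Your argument is the same scheme abstracted: you take an arbitrary balanced bipartition $V_B\sqcup V_A$ and, instead of exhibiting the triples explicitly, you argue existentially that the auxiliary symbols $x_z,z_x$ can be chosen surjectively because $|V_A|\ge|V_B|-1$ and $|V_B|\ge|V_A|-1$. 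This buys you a uniform treatment of both parities and a shorter write-up; the paper's version buys an entirely explicit dominating set with no choice step. Conceptually the two constructions are the same.
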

\begin{proof}
\emph{Upper bound:}  We prove first the upper bound by modifying the construction in the proof of Theorem~\ref{theo:bruijn_n_3}.   Suppose first $d$ even. We define the set $S=S_1\cup S_2 \cup  S_3$ of $2$-constrained sequences where: 
\begin{align*}
S_1 &=\bigl\{(b,x, b-1): b\in [d], b \textrm{ is even}, x \in [d] \bigr\}\\
S_2 &= \{(b,b-1,2): b\in [d], b \textrm{ is even} \}, \\
S_3 &= \{(1,b+1,b): b\in [d], b \textrm{ is odd}\} 
\end{align*}

\noindent
It holds that $|S_1|=(d-2)\frac{d}{2}$, $|S_2|=|S_3|=\frac{d}{2}$ and the three sets are mutually disjoint, hence $|S|=d \frac{d}{2}=\bigl\lfloor  \frac{d^2}{2} \bigr\rfloor$. 
It remains to show that $S$ is a dominating set. To this purpose we consider the following two properties:

\begin{mylist}{$({\cal P}_1)$:}
\litem{(${\cal P}_1$)} For any pair $(x,y)$ with $x$ even, there exists in $S$ a sequence of the form $(x,y,a)$ for some $a\in[d]$, $a\neq y$;
\litem{$({\cal P}_2)$} For any pair $(x,y)$ with $y$ odd, there exists in $S$ a sequence of the form $(a,x,y)$ for some $a\in[d]$, $a\neq x$.
\end{mylist}

Clearly, $S$ satisfies ${\cal P}_1$ and ${\cal P}_2$ due to the sequences in $S_1\cup S_2$ and in $S_1\cup S_3$, respectively. 
Now consider an arbitrary $2$-constrained sequence $\textbf{x}=(x_1,x_2,x_3)$ in $\bg{d}{2}{3}$ and hence $x_1\neq x_2, x_2\neq x_3$. If $x_2$ is even then from  ${\cal P}_1$ there is a sequence $(x_2,x_3,a)$ in $S$ which dominates $\textbf{x}$. Otherwise $x_2$ is odd and then from  ${\cal P}_2$ there is a sequence $(a,x_1,x_2)$ in $S$ which dominates $\textbf{x}$. It follows that $S$ is a dominating set.

\medskip

When $d$ is odd, the reasoning is slightly different although it follows the same idea. We define the set  $S=S_1\cup S_2 \cup  S_3\cup S_4$ of $2$-constrained sequences where:
\begin{align*}
S_1 &=\bigl\{(b,x, b-1):  b\in [d], b \textrm{ is even}, x \in [d]\bigr\}\\
S_2 &= \{(b,b-1,d): b\in [d], b \textrm{ is even}\} \\
S_3 &= \{(d,b+1,b): b\in [d-1], b \textrm{ is odd}\}\\
S_4 &= \{(d,b+1,b): b\in [d-1], b \textrm{ is even} \}
\end{align*}

It holds that $|S_1|=(d-2)\frac{d-1}{2}$, $|S_2|=|S_3|=|S_4|=\frac{d-1}{2}$ and the four sets are mutually disjoint, hence $|S|= \frac{(d-1)(d+1)}{2}=\bigl\lfloor  \frac{d^2}{2} \bigr\rfloor$. 
To show that $S$ is a dominating set, note again that $S$ satisfies both properties ${\cal P}_1$ and ${\cal P}_2$ due to the sequences in $S_1\cup S_2$, in $S_1\cup S_3$ if the pair $(x,y)$ does not contain the symbol $d$. To  handle the pairs containing $d$ we define the following further property:

\begin{mylist}{$({\cal P}_1)$:}
\litem{(${\cal P}_3$)} For any $x\in [d-1]$ there exists in $S$ a sequence of the form $(d,x,a)$ for some $a\in[d]$, $a\neq x$;
\end{mylist}

Notice that $S$ satisfies also ${\cal P}_3$ due to pairs in $S_3\cup S_4$. Now consider an arbitrary $2$-constrained sequence $\textbf{x}=(x_1,x_2,x_3)$ in $\bg{d}{2}{3}$ and hence $x_1\neq x_2, x_2\neq x_3$. If $x_2$ is even then, from  ${\cal P}_1$, there is a sequence $(x_2,x_3,a)$ in $S$ which dominate $\textbf{x}$. 
If $x_2$ is odd and $x_2\neq d$ then from  ${\cal P}_2$ there is a sequence $(a,x_1,x_2)$ in $S$ which dominates $\textbf{x}$. Otherwise $x_2=d$ and then from  ${\cal P}_3$ there is a sequence $(d,x_3,a)=(x_2,x_3,a)$ in $S$ which dominates $\textbf{x}$.

\medskip

\emph{Lower bound:} 
Consider the set $R \subset V(d,2,3)$, defined as $R =\{(a,b,a): a\in [d], b\in[d], a\neq b\}$. Clearly, $|R|=d(d-1)$. Let $S$ be a dominating set and notice that any sequence $(x,y,z) \in S$ dominates exactly two elements of $R$. Indeed, if $z\neq x$, $(x,y,z)$ dominates $(y,x,y)$ and $(y,z,y)$, otherwise if $z=x$, it dominates $(x,y,x)$ and $(y,x,y)$. Thus, as all the elements of $R$ must be dominated we have that $|S|\geq \frac{|R|}{2}= \frac{d(d-1)}{2}.$

This concludes the proof.\end{proof}

We now prove a general result for $cDB(d,2,n)$ with $n\geq 4$.

\begin{theorem}\label{theo:Kautz}
For any two integers $d\geq 2$ and $n\geq 4$ it holds:
\begin{eqnarray}\notag
\centering
\frac{d(d-1)^{n-1}}{2d-1} \leq \gamma(\bg{d}{2}{n}) &\leq &  (d-1)^{n-1} -(d-2)(d-1)^{n-4} \\ \notag &=& \bigg(2- \Theta\Big(\frac{1}{d}\Big) \bigg)\frac{d(d-1)^{n-1}}{2d-1}.
\end{eqnarray}
\end{theorem}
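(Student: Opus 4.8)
The plan is to handle the lower bound first, since it is the cheap part: it follows immediately from the degree bound~\eqref{eq:lower_bound_grado_nondiretto} in Fact~\ref{fact:lower_degree} applied with $t=2$, which gives $\gamma(\bg{d}{2}{n}) \geq \lceil d(d-1)^{n-1}/(2d-1)\rceil \geq d(d-1)^{n-1}/(2d-1)$. No extra combinatorial work is needed there. The asymptotic identity on the right of the statement is then a routine verification: one checks that $(d-1)^{n-1} - (d-2)(d-1)^{n-4}$ equals $(d-1)^{n-4}\bigl((d-1)^3 - (d-2)\bigr)$ and that dividing by $d(d-1)^{n-1}/(2d-1)$ yields $1 + \Theta(1/d)$, i.e. the stated $2 - \Theta(1/d)$ factor; I would state this and leave the arithmetic to the reader.

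The substance is the upper bound. I would construct an explicit dominating set $S$ of the claimed size, imitating the construction in the proof of Theorem~\ref{theo:deBruij} but adapting it to respect the $2$-constraint (no two consecutive symbols equal). The natural guess, paralleling the de~Bruijn case where we fixed $x_{n-1}\neq x_1$ and $x_n = x_2$, is to take
$$
S = \bigl\{(x_1,\dots,x_n)\in V(d,2,n) : x_{n-1}\neq x_1,\ x_n = x_2\bigr\},
$$
possibly with a small correction term. One must verify $S\subseteq V(d,2,n)$ is consistent (the constraint already forbids $x_{n-1}=x_n$, so we additionally need $x_n=x_2\neq x_1$, which is compatible) and count $|S|$: once a $2$-constrained prefix $x_1,\dots,x_{n-2}$ is chosen, $x_{n-1}$ ranges over symbols $\neq x_{n-2}$ and $\neq x_1$, and $x_n=x_2$ is forced but must also differ from $x_{n-1}$. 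Getting the count to be exactly $(d-1)^{n-1}-(d-2)(d-1)^{n-4}$ is where the case analysis lives: the number of $2$-constrained prefixes of length $n-2$ is $d(d-1)^{n-3}$, and from each we get $d-2$ choices for $x_{n-1}$ when $x_1=x_{n-2}$ and $d-3$ choices (a further reduction when $x_2$ collides) otherwise — I expect the inclusion–exclusion over the coincidences $x_1=x_{n-2}$, $x_{n-1}=x_2$ to produce exactly the subtracted term $(d-2)(d-1)^{n-4}$, and this bookkeeping is the first real obstacle.

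The second, and main, obstacle is the domination verification. Following the template of Theorem~\ref{theo:deBruij}, I would suppose some $2$-constrained $\textbf{s}=(x_1,\dots,x_n)$ is undominated and derive a contradiction. Being undominated by any in-neighbour $(y,x_1,\dots,x_{n-1})$ forces (choosing $y$ freely among the at least $d-2$ admissible symbols $\neq x_1$) that $x_1\neq x_{n-1}$; being undominated by any out-neighbour $(x_2,\dots,x_n,y)$ forces (choosing $y$ admissibly) that $x_2 = x_n$; and then $\textbf{s}$ itself has the defining shape of $S$, contradiction — provided one can always find the required witness symbol $y$ that keeps the neighbour $2$-constrained, which is exactly where $d\geq 2$ and $n\geq 4$ are used and where the argument is more delicate than in the unconstrained case (one may need $d\geq 3$ for some sub-cases and treat $d=2$ separately, where $\bg{2}{2}{n}$ is a path or cycle and the bound can be checked directly). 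The correction term in the definition of $S$, if needed, is precisely there to patch the sequences $\textbf{s}$ for which no admissible witness $y$ exists (e.g. when forcing the neighbour's constraint would require $y$ equal to a forbidden symbol); identifying that exceptional family and confirming it has size at most $(d-2)(d-1)^{n-4}$ is the crux. I would close by remarking, as the paper does for the de~Bruijn case, that this improves on the trivial upper bound $\gamma(\bg{d}{2}{n})\leq\gamma(\dbg{d}{2}{n})=(d-1)^{n-1}$ inherited from Theorem~\ref{theo:kunz_directed}.
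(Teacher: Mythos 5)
Your lower bound and the asymptotic rewriting are exactly as in the paper, and your instinct that the substance lies in the upper-bound construction is right. But the construction you propose has a genuine gap, and it is located precisely where you flagged uncertainty: the ``exceptional family'' is not small. For $\textbf{s}=(x_1,\ldots,x_n)$, the only out-neighbour $(x_2,\ldots,x_n,y)$ that can possibly lie in your $S$ is the one with $y=x_3$ (membership forces the last symbol of the shifted sequence to equal its second symbol), and this is not even a vertex of $\bg{d}{2}{n}$ when $x_3=x_n$; the only way an in-neighbour $(y,x_1,\ldots,x_{n-1})$ can lie in $S$ is when $x_{n-1}=x_1$. Hence every $2$-constrained sequence with $x_{n-1}\neq x_1$ and $x_n=x_3$ is undominated (note $x_n=x_3$ already forces $x_n\neq x_2$, so such a sequence is not in $S$ either). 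For $n=4$ this family is empty, since $x_4=x_3$ is forbidden, and your set does dominate; but for every $n\geq 5$ it is nonempty --- e.g.\ $(1,2,3,2,3)$ in $\bg{3}{2}{5}$ --- and its cardinality is $\Theta\bigl((d-1)^{n-1}\bigr)$, a roughly $1/d$ fraction of all vertices and of the same order as $|S|$ itself, not the $O\bigl(d(d-1)^{n-4}\bigr)$ you were hoping for. Consequently no ``small correction term'' can rescue the wrap-around construction: adding the exceptional family essentially doubles the set and already exceeds the trivial bound $(d-1)^{n-1}$ inherited from Theorem~\ref{theo:kunz_directed}. (As a secondary point, the count of your $S$ works out to $(d-2)(d-1)^{n-2}+(-1)^{n}(d-1)(d-2)$, which does not match the stated bound, though this is moot once domination fails.)

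The paper's construction is different in kind: it is prefix-anchored rather than wrap-around. It takes $S=S_1\cup S_2\cup S_3$ where $S_1$ consists of the $2$-constrained sequences beginning $1,i$ with $i\neq d$, $S_2$ of those beginning $1,d,i$ with $i\neq 1$, and $S_3$ of those beginning $d,1,d,1$; these are pairwise disjoint and their sizes sum to exactly $(d-1)^{n-1}-(d-2)(d-1)^{n-4}$. Domination is then shown by a single left shift (prepend the symbol $1$ to the given sequence), with the sequences whose own prefixes are $d,\ldots$, $1,d,\ldots$ or $d,1,d,\ldots$ handled by $S_2$ and $S_3$. The structural reason this works where the wrap-around fails is that the membership condition involves only the first few coordinates, so the shifted witness never runs into the adjacency constraint at the seam between $x_n$ and the appended symbol.
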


\begin{proof}
\emph{Lower bound:}
It follows from  (\ref{eq:lower_bound_grado_nondiretto}). 
%

\emph{Upper bound:} We consider the set $S=S_1 \cup S_2 \cup S_3$ of $2$-constrained sequences where
\begin{align*}
S_1&=\bigl\{ (1, i, x_3,\ldots, x_n) \in [d]^n: i\neq d \bigr\}\\
S_2&=\bigl\{ (1, d, i, x_4\ldots, x_n) \in [d]^n: i\neq 1 \bigr\}\\
S_3&=\bigl\{ (d, 1, d, 1, x_5\ldots, x_n) \in [d]^n\bigr\}
\end{align*}
\noindent
Since $S_1, S_2 ,S_3$ are pairwise disjoint, then, 
$$|S|=(d-2)(d-1)^{n-2}+(d-2)(d-1)^{n-3}+(d-1)^{n-4}= (d-1)^{n-1} -(d-2)(d-1)^{n-4}.$$

It remains to prove that $S$ is a dominating set.
When $d=2$ we have $S_1=S_2=\emptyset$ and $S_3$ contains only the sequence $(d,1,d,1, \ldots)$; trivially $S$ is a dominating set for $\bg{2}{2}{n}$ (which contains only two vertices) and thus we assume $d\geq 3$. 

Given  any $2$-constrained sequence ${\bold y}=(y_1,y_2,\ldots, y_n)$ representing a vertex of $\bg{d}{2}{n}$, we distinguish three cases according to the symbol $y_1$:
\begin{itemize}
\item $y_1=1$: if $y_2\neq d$ then ${\bold y}$ is  a sequence in  $S_1$; if $y_2=d$,  we consider the value of  the third symbol $y_3$. If $y_3 \neq 1$ then ${\bold y}$ is  in $S_2$.  Otherwise,  $y_3=1$ and  ${\bold y}$ is dominated by a sequence in $S_3$.
\item $1< y_1<d$: ${\bold y}$ is dominated by sequence $(1,y_1, \ldots, y_{n-1})$ in $S_1$. 
\item $y_1=d$: if $y_2\neq 1$ then ${\bold y}$ is dominated by a sequence in $S_2$; if $y_2=1$, again we distinguish two cases according to the third symbol.  
If $1< y_3 < d $ then ${\bold y}$ is dominated by $(1, y_3, y_4,\ldots, y_n, z)$ in $S_1$;
if $y_3=d$ we look at $y_4$: if $y_4=1$ then ${\bold y}=(d, 1, d, 1 ,\ldots, y_n)$ is a sequence in $S_3$, otherwise ${\bold y}=(d, 1, d, y_4,\ldots, y_n)$ is dominated by a sequence $(1, d, y_4, \ldots, y_n, z)$ in $S_2$.
\end{itemize}
\end{proof}

\paragraph*{Comparison with the directed case.}
We conclude this section by observing that Theorem~\ref{theo:kunz_directed} 
gives us $ \gamma(\bg{d}{2}{n}) \leq  \gamma(\dbg{d}{2}{n}) = (d-1)^{n-1}$.
 The results of this section show that, for $n=2$, $\gamma(\bg{d}{2}{n})=\gamma(\dbg{d}{2}{n})=d-1$.  For $n\geq 3$ the upper bound of $(d-1)^{n-1}$  derived from the directed case is improved to $(d-1)^{n-1} -(d-2)(d-1)^{n-4}$.
 

\section{Domination numbers for $\dbg{d}{3}{n}$ and $\bg{d}{3}{n}$ }

Here we consider the $3$-constrained de Bruijn graphs. This class of graphs has not been studied in the literature. Similar to the de Bruijn and Kautz graphs, finding the domination number in the undirected case seems more difficult as shown by the next results.

\begin{theorem}
\label{th.directed_d3n}
For any two integers $d\geq  3$ and $n\ge 4$ it holds:
\begin{itemize}
    \item  For $d$ even:  $\gamma(\dbg{d}{3}{n})=d(d-2)^{n-2}$; 
    \item \textrm{For $d$ odd: } \begin{eqnarray} \notag
d(d-2)^{n-2}\leq \gamma(\dbg{d}{3}{n}) &\leq &  (d-1)^2(d-2)^{n-3} \\ \notag &=& \bigg(1+\Theta\Big(\frac{1}{d^2}\Big)\bigg)d(d-2)^{n-2}.
\end{eqnarray}
\end{itemize}
\end{theorem}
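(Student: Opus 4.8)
\emph{Lower bounds.} For either parity the lower bound $d(d-2)^{n-2}\le\gamma(\dbg{d}{3}{n})$ is nothing but (\ref{eq:lower_bound_grado_diretto}) with $t=3$: indeed $\frac{d!}{(d-t+2)!}(d-t+1)^{n-t+1}=\frac{d!}{(d-1)!}(d-2)^{n-2}=d(d-2)^{n-2}$, which is an integer, so the ceiling disappears. Hence everything reduces to constructing small dominating sets, and I will use repeatedly that in $\dbg{d}{3}{n}$ the in-neighbours of a vertex $(y_1,\dots,y_n)$ are exactly the sequences $(a,y_1,\dots,y_{n-1})$ with $a\notin\{y_1,y_2\}$, so a set $S$ dominates $(y_1,\dots,y_n)$ iff that vertex is in $S$ or one of these $d-2$ in-neighbours is.

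\emph{The even case.} Since the lower bound equals $d(d-2)^{n-2}=|V(d,3,n)|/(d-1)$, a perfect dominating set would be optimal, and I would produce one. Take a fixed-point-free involution $\sigma$ of $[d]$ (it exists exactly because $d$ is even), and let $S=\{(x_1,\dots,x_n)\in V(d,3,n):x_2=\sigma(x_1)\}$. A symbol-by-symbol count ($x_1$ free, $x_2$ forced and $\ne x_1$, each remaining symbol avoiding the two preceding, distinct, symbols) gives $|S|=d(d-2)^{n-2}$. For domination of a vertex $\mathbf y=(y_1,\dots,y_n)$: if $y_2=\sigma(y_1)$ then $\mathbf y\in S$; otherwise $(\sigma(y_1),y_1,y_2,\dots,y_{n-1})$ is a valid vertex (using $\sigma(y_1)\ne y_1$ and, by hypothesis, $\sigma(y_1)\ne y_2$), it lies in $S$ because its second symbol is $y_1=\sigma(\sigma(y_1))$, and appending $y_n$ to its shift produces $\mathbf y$. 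This matches the lower bound, so $\gamma(\dbg{d}{3}{n})=d(d-2)^{n-2}$ when $d$ is even.

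\emph{The odd case.} Now $[d]$ has no fixed-point-free involution, so I would reserve the symbol $d$. Pick a fixed-point-free involution $\sigma$ of $[d-1]$ and set $S_0=\{(x_1,\dots,x_n)\in V(d,3,n): x_1\ne d,\ x_2=\sigma(x_1)\}$; by the argument of the even case, $S_0$ dominates every vertex whose first symbol is not $d$, and $|S_0|=(d-1)(d-2)^{n-2}$. It remains to dominate the vertices $(d,y_2,\dots,y_n)$. Fix two distinct symbols $a_0,b_0\in[d-1]$ and add $S_1=\{(a_0,d,\ast,\dots,\ast)\in V(d,3,n)\}\cup\{(b_0,d,a_0,\ast,\dots,\ast)\in V(d,3,n)\}$, which has $(d-2)^{n-2}+(d-2)^{n-3}$ vertices, is disjoint from $S_0$ (its members have $d$ in the second coordinate, impossible in $S_0$) and whose two blocks are mutually disjoint (they differ in the first coordinate). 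Hence $|S_0\cup S_1|=(d-1)(d-2)^{n-2}+(d-2)^{n-2}+(d-2)^{n-3}=d(d-2)^{n-2}+(d-2)^{n-3}=(d-1)^2(d-2)^{n-3}$. To see $S_0\cup S_1$ is dominating it suffices, given the above, to dominate $\mathbf y=(d,y_2,\dots,y_n)$: if $y_2\ne a_0$ then $(a_0,d,y_2,\dots,y_{n-1})$ belongs to the first block and shifts to $\mathbf y$; if $y_2=a_0$ then $(b_0,d,a_0,y_3,\dots,y_{n-1})$ belongs to the second block and shifts to $\mathbf y$. Finally $(d-1)^2(d-2)^{n-3}=\bigl(1+\frac{1}{d(d-2)}\bigr)d(d-2)^{n-2}=\bigl(1+\Theta(\frac{1}{d^2})\bigr)d(d-2)^{n-2}$, which is the claimed form.

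\emph{Where the difficulty lies.} The even case is a clean perfect-code construction, and the lower bounds come for free. The real issue is the odd case: the target $(d-1)^2(d-2)^{n-3}$ exceeds the lower bound $d(d-2)^{n-2}=(d^2-2d)(d-2)^{n-3}$ by only $(d-2)^{n-3}$, so the correction set that takes care of ``first symbol $=d$'' has essentially no slack. A naive fix — replacing $\sigma$ by a fixed-point-free permutation that is an involution plus one $3$-cycle and then patching the resulting bad prefixes — would cost about $3(d-2)^{n-3}$ and overshoot. The construction above avoids this because one well-chosen first symbol $a_0$ already dominates every first-symbol-$d$ vertex except those whose second symbol is $a_0$, and that residual family, all sharing the two-symbol prefix $(d,a_0)$, is thin enough to be covered by exactly $(d-2)^{n-3}$ more vertices. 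I would also check the small and degenerate instances by hand (in particular $d=3$, where each vertex of $V(d,3,n)$ is determined by its first two symbols) to be sure the counts above remain valid there.
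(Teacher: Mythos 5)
Your proposal is correct and follows essentially the same route as the paper: the lower bound is read off from (\ref{eq:lower_bound_grado_diretto}), the even case is handled by a perfect dominating set pairing the first two symbols via a fixed-point-free involution (the paper uses the specific pairing $2i-1\leftrightarrow 2i$), and the odd case restricts the involution to $[d-1]$ and adds a correction set of exactly $(d-1)(d-2)^{n-3}$ in-neighbours of the vertices beginning with $d$. Your correction set (prefixes $(a_0,d,\ast)$ and $(b_0,d,a_0,\ast)$) differs only cosmetically from the paper's (prefixes $(\min([d]\setminus\{x_2\}),d,x_2,\ast)$), and both yield the same cardinality $(d-1)^2(d-2)^{n-3}$.
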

\begin{proof}
\emph{Lower Bound.} From (\ref{eq:lower_bound_grado_diretto}) we have:

$$\gamma(\dbg{d}{3}{n})\geq  \biggl\lceil\frac{d(d-1)(d-2)^{n-2}}{d-1}\biggr\rceil\geq
d(d-2)^{n-2}.$$
\medskip
\noindent
\emph{Upper bound. } 
We distinguish two cases, according to the parity of $d$.

If $d$ is even, we define the following set of 3-constrained sequences: $$S=\{ (i,i+1, x_3, \ldots, x_n): i \mbox{ is odd}\} \cup \{ (i, i-1, x_3, \ldots, x_n): i \mbox{ is even} \}.$$
Given any sequence $\textbf{s}=(x_1, \ldots, x_n)$, it is clearly dominated, indeed:
\begin{itemize}
    \item 
if either $x_1$ is even and $x_2=x_1 -1$ or $x_1$ is odd and $x_2=x_1+1$, then $\textbf{s} \in S$;
    \item
    if $x_1$ is even but $x_2 \neq x_1 -1$, $\textbf{s}$ is dominated by $((x_1-1), x_1, \ldots) \in S$ and if $x_1$ is odd but $x_2 \neq x_1 +11$, $\textbf{s}$ is dominated by $((x_1+1), x_1, \ldots) \in S$, regardless of the value of $x_3$.
    \end{itemize}
It is easy to see that $|S|$ coincides with the lower bound.

If, on the contrary, $d$ is odd, the previously defined $S$ is not well defined when $i=d$.
We hence define $S \subseteq V(d,3,n)$ as $S=\{ (i, i+1, x_3, \ldots, x_n) : i \mbox { is odd and }i \neq d\} \cup \{ (i, i-1, x_3, \ldots, x_n) : i \mbox{ is even }\} \\ 
\cup \{ (x, d, x_2, x_3, \ldots, x_n) : x=\min([d] \setminus \{x_2\} \}$.

Let $\textbf{x}=(x_1,x_2,\ldots, x_n)$ be a vertex in $\dbg{d}{3}{n}$. We show that  $\textbf{x}$ is dominated by considering the following three cases:

\begin{itemize}
    \item 
    if $x_1$ is even and $x_2=x_1 -1$ then $\textbf{s} \in S$; if instead $x_2 \neq x_1 -1$ then $\textbf{s}$ is dominated by $((x_1-1), x_1, \ldots) \in S$;
    \item
    if $x_1 \neq d$ is odd and $x_2=x_1+1$, then $\textbf{s} \in S$;
    if, instead, $x_2 \neq x_1 +1$, $\textbf{s}$ is dominated by $((x_1+1), x_1, \ldots) \in S$;
    \item
    if  $x_1=d$ then $\textbf{x}$ is dominated by sequence $(x, d, x_2, x_3, \ldots, x_n)$, that is in $S$.
    \end{itemize}

For what concerns $|S|$, it is $(d-1)(d-2)^{n-2}+(d-1)(d-2)^{n-3}$, that is equal to 
$(d-1)^2(d-2)^{n-3}$ as the stated upper bound. 
\end{proof}

We consider now the undirected case for which we provide lower and upper bounds.

\begin{theorem}\label{theo:undirected_d3n}
For any two integers $d\geq  3$ and $n\ge 4$ it holds:
\begin{eqnarray} \notag
\centering
\frac{d(d-1)(d-2)^{n-2}}{2d-3}\leq \gamma(\bg{d}{3}{n})&\leq& (d-1)(d-2)^{n-2}\\ \notag &=& \biggl(2- \Theta\Big(\frac{1}{d}\Big) \biggr) \frac{d(d-1)(d-2)^{n-2}}{2d-3}.
\end{eqnarray}
\end{theorem}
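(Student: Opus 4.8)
The plan is to follow the same two-pronged strategy used throughout the paper: the lower bound is immediate from Fact~\ref{fact:lower_degree}, so the real work is constructing an explicit dominating set for $\bg{d}{3}{n}$ of size $(d-1)(d-2)^{n-2}$. For the lower bound, inequality~(\ref{eq:lower_bound_grado_nondiretto}) with $t=3$ gives exactly $\left\lceil \frac{d(d-1)(d-2)^{n-2}}{2d-3}\right\rceil$, which is the stated bound; and a direct computation shows $(d-1)(d-2)^{n-2} = \bigl(2-\Theta(1/d)\bigr)\frac{d(d-1)(d-2)^{n-2}}{2d-3}$ since $\frac{(d-1)(2d-3)}{d} = 2d-5+3/d = (2-\Theta(1/d))(d-1)$. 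So the asymptotic equality in the statement is routine once the upper bound construction is in place.

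For the upper bound, I would mimic the construction in the proof of Theorem~\ref{th.directed_d3n} (the directed $3$-constrained case) but enlarge the set so that it dominates in both edge directions, exactly as Theorem~\ref{theo:Kautz} enlarges the directed Kautz dominating set. The key observation reused from the directed proof is: for $d$ even, the set $S^{+}=\{(i,i+1,x_3,\dots,x_n): i \text{ odd}\}\cup\{(i,i-1,x_3,\dots,x_n): i \text{ even}\}$ dominates every vertex via an outgoing edge by forcing the first two coordinates into a fixed pairing. For the undirected graph a vertex $\textbf{y}=(y_1,\dots,y_n)$ is also dominated by any sequence of the form $(z,y_1,\dots,y_{n-1})$ in $S$, so I want a set that, for every $3$-constrained $\textbf{y}$, either contains $\textbf{y}$, or contains some $(z,y_1,\dots,y_{n-1})$, or contains some $(y_2,\dots,y_n,w)$. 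I would propose a set of the shape $S=\{(x_1,\dots,x_n): x_1 \text{ and } x_2 \text{ satisfy a fixed pairing constraint depending on } x_1\}$ together with a small corrective family handling the residual cases (e.g. the ones that, in the directed proof, needed the extra block for odd $d$, plus the cases where domination must come through an incoming edge). The bookkeeping target is that the main block has size $(d-1)(d-2)^{n-2}$: once $x_3,\dots,x_n$ are chosen ($(d-2)^{n-2}$ ways under the $3$-constraint on the tail relative to a fixed short prefix) and $x_1$ is chosen with its partner $x_2$ determined, we get a factor $d-1$ from the choice of $x_1$ — this matches $(d-1)(d-2)^{n-2}$, so the corrective terms must be absorbed into the pairing (i.e. chosen to have size $0$ asymptotically, or folded in) rather than added on top, unlike in Theorem~\ref{theo:Kautz} where the correction genuinely lowers the count. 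I would therefore aim for a single clean family $S=\{(1,i,x_3,\dots,x_n): i\neq 1\}$-style prefix-anchored set of size $(d-1)(d-2)^{n-2}$ (anchor the first symbol to $1$, the second to anything $\neq 1$, and the tail constrained), and then verify domination by the case analysis on $y_1$: if $y_1=1$ handle via membership or an $S_2$-type sub-block for the $y_2$-bad case; if $y_1\neq 1$ dominate $\textbf{y}$ by $(1,y_1,y_2,\dots,y_{n-1})$, which lies in $S$ provided that sequence is $3$-constrained — and here the $3$-constraint on $\textbf{y}$ plus $y_1\neq 1$ must be checked to guarantee the prepended $1$ does not collide with $y_2$ or $y_3$.

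The main obstacle I anticipate is exactly that last verification: prepending a fixed symbol (say $1$) to a valid $3$-constrained sequence can destroy the $3$-constraint if $y_1=1$, $y_2=1$, or $y_3=1$, so the "anchor to $1$" idea cannot dominate all vertices with a single coset and needs a secondary block (as in the proofs of Theorems~\ref{theo:Kautz} and~\ref{th.directed_d3n}) to catch the sequences whose second or third symbol is $1$, or whose first symbol is $1$ with a forbidden second symbol. Making the union of these blocks still have size exactly $(d-1)(d-2)^{n-2}$ — not merely $O$ of it — is the delicate combinatorial accounting step, and it is the one place where I would expect to spend real effort rather than routine calculation; the natural fix, paralleling the directed proof, is to let the pairing of $(x_1,x_2)$ itself do the work so that the blocks partition cleanly, e.g. pairing $x_1$ with a canonical $x_2 = \min([d]\setminus\{x_1, \text{something}\})$, at the cost of a more intricate but still finite case split when verifying domination through incoming edges.
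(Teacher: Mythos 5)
Your overall plan coincides with the paper's proof: the lower bound is read off from~(\ref{eq:lower_bound_grado_nondiretto}) with $t=3$, and the upper bound comes from the prefix-anchored set $S=\{(1,x_2,\ldots,x_n)\in V(d,3,n)\}$ of all $3$-constrained sequences whose first symbol is $1$, which has exactly $(d-1)(d-2)^{n-2}$ elements. The gap is that you stop short of verifying domination and instead predict that this single family cannot work and must be supplemented by a secondary block (with delicate accounting to keep the total at $(d-1)(d-2)^{n-2}$). That prediction is wrong, and the verification you defer is in fact the whole content of the upper bound. The case analysis closes as follows, by the position $i$ of the \emph{first} occurrence of the symbol $1$ in a vertex $\textbf{y}=(y_1,\ldots,y_n)$. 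If $i=1$, then $\textbf{y}\in S$. If $i=2$, prepending indeed fails, but the \emph{out}-neighbor $(y_2,y_3,\ldots,y_n,w)=(1,y_3,\ldots,y_n,w)$ with $w\notin\{y_{n-1},y_n\}$ (such a $w$ exists since $d\geq 3$) lies in $S$ and dominates $\textbf{y}$ — this is precisely the move "$(y_2,\ldots,y_n,w)$'' that you listed as available but never deployed. If $i\geq 3$ or $1$ does not occur at all, the in-neighbor $(1,y_1,\ldots,y_{n-1})$ is $3$-constrained, because the prepended $1$ sits at distance $i\geq 3$ from the first (shifted) occurrence of $1$; in particular $y_3=1$ is \emph{not} an obstruction, contrary to what you assert, since $|1-4|=3$ satisfies the $3$-constraint.

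So no corrective family, no pairing of $(x_1,x_2)$, and no $\min(\cdot)$ device is needed; the two directions of domination in the undirected graph exactly cover for each other's failure modes. The place where you "expect to spend real effort'' is a three-line check once you notice that the bad case for the in-neighbor ($y_2=1$) is precisely the good case for the out-neighbor. Your lower-bound and asymptotic computations are correct as stated.
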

\begin{proof}
{\em Lower bound.}
The lower bound follows from (\ref{eq:lower_bound_grado_nondiretto}). 

\noindent
{\em Upper bound.} Let $S$ be a set of 3-constrained sequences, and $S = \{(1,x_2,\ldots, x_n) \}$.

To prove that $S$ is a dominating set, consider any vertex \textbf{x}$=(x_1,x_2,\ldots, x_n)$ in $\bg{d}{3}{n}$; we distinguish some cases according to the position $i$ of the first occurrence of 1, the first symbol of $\Sigma$:
\begin{itemize}
    \item 
if $i=1$ then $\textbf{x} \in S$;
\item
if $i=2$ then sequence $(1, x_3, \ldots, x_n, y)$ for any $y \neq x_n, x_{n-1}$ is in $S$ and dominates $\textbf{x}$;
\item
if $3 \leq i \leq n$, then sequence $(1, x_1, \ldots, x_{i-1},1,x_{i+1}, \ldots, x_{n-1})$ is in $S$ and dominates $\textbf{x}$; 
\item
if  $\textbf{x}$ does not contain any 1, then $(1, x_1, \ldots, x_{n-1})$ is in $S$ and dominates $\textbf{x}$.
\end{itemize}
Moreover $|S|=(d-1)(d-2)^{n-2}$.
\end{proof}

\paragraph*{Comparison with the directed case.} 
Since $ \gamma(\bg{d}{3}{n}) \leq  \gamma(\dbg{d}{3}{n})$, for $d$ even Theorem~\ref{th.directed_d3n} gives an upper bound of $d(d-2)^{n-2}$ and Theorem~\ref{theo:undirected_d3n} improves this result to $(d-1)(d-2)^{n-2}=d(d-2)^{n-2}- (d-2)^{n-2}$.  For $d$ odd, Theorem~\ref{th.directed_d3n} gives $(d-1)^2(d-2)^{n-3}$ and Theorem~\ref{theo:undirected_d3n} improves this to $(d-1)(d-2)^{n-2}=(d-1)^2(d-2)^{n-3}-(d-1)(d-2)^{n-3}$.

\section{Domination numbers of $\dbg{d}{t}{n}$ and $\bg{d}{t}{n}$}
In this section we consider the general case $t\geq 3$ and we assume $n>t$. The case $t=n$, corresponding to vertices labeled by (partial) permutations,  will be studied in the next section.  We start by the study of the domination number in the directed case.

\begin{theorem}
\label{th.directed_dtn}
For any three integers $d, t, n$  such that $2\leq t\leq d$ and $t<n$, it holds:
{\small
\begin{eqnarray}\notag
\frac{d!}{(d-t)!}\frac{(d-t+1)^{n-t}}{(d-t+2)}\leq\gamma(\dbg{d}{t}{n})  &\leq&\frac{(d-1)(d-1)!}{(d-t)!}(d-t+1)^{n-t-1} \\ \notag
&=&\biggl(1+ \Theta\Big( \frac{t}{d(d-t+1)}\Big)\biggr)\frac{d!}{(d-t)!}\frac{(d-t+1)^{n-t}}{(d-t+2)} 
\end{eqnarray}
}
\end{theorem}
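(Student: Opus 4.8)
The lower bound requires nothing new: after rewriting $\tfrac{d!}{(d-t+2)!}(d-t+1)^{n-t+1}=\tfrac{d!}{(d-t)!}\cdot\tfrac{(d-t+1)^{n-t}}{d-t+2}$ it is exactly the bound~(\ref{eq:lower_bound_grado_diretto}), whose ceiling may be dropped for $t\ge 2$ (as observed after Fact~\ref{fact:lower_degree}). For the upper bound the plan is to exhibit an explicit dominating set $S=S_1\cup S_2$, where $S_1$ is the set of vertices of $\dbg{d}{t}{n}$ with first coordinate $1$, and
$$S_2=\{(x_1,\dots,x_n)\in V(d,t,n)\ :\ 1\in\{x_3,\dots,x_t\}\ \text{ and }\ x_1=\min([d]\setminus\{x_2,\dots,x_t\})\}.$$
(For $t=2$ this gives $S_2=\emptyset$ and $|S|=(d-1)^{n-1}$, the exact Kautz value; for $t=3$ it gives $|S|=(d-1)^2(d-2)^{n-3}$, matching the odd-$d$ bound of Theorem~\ref{th.directed_d3n}. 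These are reassuring sanity checks.)

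First I would verify that $S$ dominates $\dbg{d}{t}{n}$, by cases on a vertex $\mathbf{x}=(x_1,\dots,x_n)$. If $x_1=1$ then $\mathbf{x}\in S_1$. If $x_1\ne 1$ and $1\notin\{x_2,\dots,x_{t-1}\}$, then $1\notin\{x_1,\dots,x_{t-1}\}$, so $(1,x_1,\dots,x_{n-1})$ is a $t$-constrained vertex, it lies in $S_1$, and $\mathbf{x}$ is one of its out-neighbours. If $x_1\ne 1$ and $1\in\{x_2,\dots,x_{t-1}\}$, set $y=\min([d]\setminus\{x_1,\dots,x_{t-1}\})$; this set is nonempty (as $t\le d$) and avoids $1$, so $y\ne 1$ and $y\notin\{x_1,\dots,x_{t-1}\}$, whence $(y,x_1,\dots,x_{n-1})$ is a $t$-constrained vertex that satisfies both defining conditions of $S_2$ by construction and has $\mathbf{x}$ as an out-neighbour. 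In each case the one point to check is that the prepended word is $t$-constrained, which holds because the prepended symbol lies outside $\{x_1,\dots,x_{t-1}\}$. Then I would count: a $t$-constrained length-$t$ prefix extends to a length-$n$ vertex in exactly $(d-t+1)^{n-t}$ ways, so $|S_1|=\tfrac{(d-1)!}{(d-t)!}(d-t+1)^{n-t}$; and choosing which of positions $3,\dots,t$ carries $1$, filling the remaining $t-2$ positions among $2,\dots,t$ with distinct symbols $\ne 1$, and noting that $x_1$ is then forced, gives $|S_2|=(t-2)\tfrac{(d-1)!}{(d-t+1)!}(d-t+1)^{n-t}$. Since every word of $S_2$ has $1\in\{x_2,\dots,x_t\}$ and hence $x_1\ne 1$, we get $S_1\cap S_2=\emptyset$; and $|S_1|$, $|S_2|$ are $(d-t+1)$, $(t-2)$ copies of $\tfrac{(d-1)!}{(d-t)!}(d-t+1)^{n-t-1}$, so $|S|=(d-1)\tfrac{(d-1)!}{(d-t)!}(d-t+1)^{n-t-1}=\tfrac{(d-1)(d-1)!}{(d-t)!}(d-t+1)^{n-t-1}$, as claimed. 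The $\Theta$-form is then the identity $\tfrac{\text{upper bound}}{\text{lower bound}}=\tfrac{(d-1)(d-t+2)}{d(d-t+1)}=1+\tfrac{t-2}{d(d-t+1)}$ together with $\tfrac t3\le t-2\le t$ for $t\ge 3$.

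The step I expect to need the most care is calibrating $S_2$: pinning down its defining condition so that $S_2$ is \emph{exactly} disjoint from $S_1$ and contributes \emph{exactly} $(t-2)$ copies of $\tfrac{(d-1)!}{(d-t)!}(d-t+1)^{n-t-1}$, so that $|S|$ equals --- rather than merely is bounded by --- the stated value; the ``$t-2$'' is precisely the number of positions $2,\dots,t-1$ in which the symbol $1$ can sit and obstruct the naive move of prepending $1$. Conceptually, a vertex of $S$ is determined by its first $t+1$ coordinates, and $S$ is the lift to dimension $n$ of a dominating set of $\dbg{d}{t}{t+1}$: the vertices with first coordinate $1$, together with one in-neighbour for each remaining vertex that would otherwise be undominated.
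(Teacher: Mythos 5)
Your construction is essentially the paper's: your $S_2$ is exactly the union $\bigcup_{i=3}^{t}S_i$ of the paper's sets $S_i=\{\mathbf{x}: x_i=1,\ x_1=\min([d]\setminus\{x_2,\ldots,x_t\})\}$, and your case analysis (prepend $1$ when $1\notin\{x_1,\ldots,x_{t-1}\}$, otherwise prepend the forced minimum) matches the paper's domination argument, with the same counts and the same ratio computation $1+\frac{t-2}{d(d-t+1)}$. The proof is correct and takes the same approach.
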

\begin{proof}
{\em Lower bound. } It follows from (\ref{eq:lower_bound_grado_diretto}) and it is equal to
$\frac{|V(d,t,n)|}{d-t+2}$.  

{\em Upper bound. } Consider the following $t-1$ sets of $t$-constrained sequences:
\begin{itemize}
\item $S_1=\{\textbf{x}=(x_1,x_2,\ldots, x_n) : x_1=1, \textbf{x } \textrm{ is $t$-constrained} \}$;
\item $S_i=\bigl\{\textbf{x}=(x_1,x_2,\ldots, x_n): x_{i}=1, x_1=\min([d] \setminus \{x_2,\ldots, x_t\}), \textrm{\textbf{x} is $t$-constrained}\bigr\}$, for $3\leq i\leq t$.
\end{itemize}
It turns out that $|S_1|=\frac{(d-1)!}{(d-t)!}(d-t+1)^{n-t}$ and 
$|S_i| =\frac{1}{(d-t+1)}\frac{(d-1)!}{(d-t)!}(d-t+1)^{n-t}$ for each $i\geq 3$.
Calling $S=S_1 \cup \bigl(\cup_{i=3}^{t}S_i\bigr)$, we have:
\begin{eqnarray}\notag
|S|& =& \frac{(d-1)!}{(d-t)!}(d-t+1)^{n-t} +\frac{t-2}{(d-t+1)}\frac{(d-1)!}{(d-t)!}(d-t+1)^{n-t}\\ \notag
&=&\frac{(d-1)(d-1)!}{(d-t)!}(d-t+1)^{n-t-1}\\ \notag
& = & \left(1+ \frac{t-2}{d(d-t+1)}\right)\frac{d!}{(d-t)!}\frac{(d-t+1)^{n-t}}{(d-t+2)} \\ \notag 
&=& \biggl(1+ \Theta\Big( \frac{t}{d(d-t+1)}\Big)\biggr)\frac{d!}{(d-t)!}\frac{(d-t+1)^{n-t}}{(d-t+2)}. \notag
\end{eqnarray}

To conclude the proof, it remains to show that $S$ is a dominating set for $\dbg{d}{t}{n}$.
To this aim, consider a general vertex $\bold{x}=(x_1,x_2,\ldots, x_n)$.
If symbol 1 does not occur in the first $t$ coordinates, then $\bold{x}$ is dominated by $(1,x_1, x_2,\ldots, x_{n-1})$ of $S_1$.
If, on the contrary, there exists $1 \leq i \leq t$ for which $x_i=1$ then $i$ must be unique and we distinguish the following three cases:
\begin{itemize}
\item if $i=1$ then $\bold{x} \in S_1$ and hence it is dominated;
\item  if $2\leq i < t$ then $(x, x_1, x_2,\ldots, x_{i-1},1,x_{i+1},\ldots, x_{n-1}) \in S_{i+1}$ with $x=\min([d]-{x_1,x_2\ldots, x_t})$ dominates $\bold{x}$;
\item if $i=t$ then $(1, x_1, x_2,\ldots, x_{t-1},1,x_{t+1},\ldots, x_{n-1}) \in S_1$ dominates $\bold{x}$.
\end{itemize}
\end{proof}

Notice then when $t=2$ the upper bound converges to the lower bound and we obtain the result of Theorem~\ref{theo:kunz_directed}. 

The domination number of the undirected $t$-constrained de Bruijn graphs  seems more difficult and the only bounds we have follow from (\ref{eq:lower_bound_grado_nondiretto}) and  Theorem~\ref{th.directed_dtn}. For the sake of completeness we state them in the next corollary.

\begin{corollary}

\label{theo:undirected_DTN}
For any three integers $d\geq  3$, $t \geq 3$ and $n\ge 4$ it holds
{\small
\begin{eqnarray}\notag
\frac{d!}{(d-t)!} \frac{(d-t+1)^{n-t}}{2d-2t+3}\leq \gamma(\bg{d}{t}{n}) &\leq&\frac{(d-1)(d-1)!}{(d-t)!}(d-t+1)^{n-t-1} \\ \notag
&=& \biggl(2+\Theta\Bigl(\frac{2t-d}{d(d-t+1)}\Bigr)
 \biggr)  \frac{d!}{(d-t)!} \frac{(d-t+1)^{n-t}}{2d-2t+3}
\end{eqnarray}
}
\end{corollary}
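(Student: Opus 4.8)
The plan is to obtain the two bounds from results already in hand and then verify the asymptotic reformulation by a direct algebraic manipulation. For the lower bound, I would simply invoke inequality~(\ref{eq:lower_bound_grado_nondiretto}) from Fact~\ref{fact:lower_degree}, which gives
$\gamma(\bg{d}{t}{n}) \geq \bigl\lceil \frac{d!(d-t+1)^{n-t}}{(d-t)!(2d-2t+3)}\bigr\rceil$; dropping the ceiling yields exactly the claimed left-hand side. For the upper bound, the key observation (already exploited repeatedly in the paper, e.g.\ in the ``Comparison with the directed case'' paragraphs) is that a dominating set for a directed graph is \emph{a fortiori} a dominating set for the underlying undirected graph, since $N^+(v) \subseteq N(v)$. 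Hence $\gamma(\bg{d}{t}{n}) \leq \gamma(\dbg{d}{t}{n})$, and applying the upper bound from Theorem~\ref{th.directed_dtn} gives $\gamma(\bg{d}{t}{n}) \leq \frac{(d-1)(d-1)!}{(d-t)!}(d-t+1)^{n-t-1}$, which is the stated right-hand side. Note that the hypotheses $d \geq 3$, $t \geq 3$, $n \geq 4$ in the corollary are a special case of the hypotheses $2 \leq t \leq d$, $t < n$ of Theorem~\ref{th.directed_dtn} (using $n \geq 4 > 3 \geq t$), so both ingredients apply without modification.

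It remains to check the final equality, namely that the upper bound equals $\bigl(2 + \Theta(\frac{2t-d}{d(d-t+1)})\bigr)$ times the lower bound. Writing $L = \frac{d!}{(d-t)!}\frac{(d-t+1)^{n-t}}{2d-2t+3}$ for the lower-bound quantity and $U = \frac{(d-1)(d-1)!}{(d-t)!}(d-t+1)^{n-t-1}$ for the upper-bound quantity, I would compute the ratio
$\frac{U}{L} = \frac{(d-1)(d-1)!}{(d-t)!}(d-t+1)^{n-t-1} \cdot \frac{(d-t)!(2d-2t+3)}{d!\,(d-t+1)^{n-t}}
= \frac{(d-1)(2d-2t+3)}{d(d-t+1)}$, where the factorials cancel (using $\frac{(d-1)!}{d!} = \frac1d$) and the powers of $(d-t+1)$ collapse to a single factor in the denominator. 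Then I would rewrite $\frac{(d-1)(2d-2t+3)}{d(d-t+1)} = 2 + \frac{(d-1)(2d-2t+3) - 2d(d-t+1)}{d(d-t+1)}$ and simplify the numerator of the correction term: $(d-1)(2d-2t+3) - 2d(d-t+1) = 2d^2 - 2dt + 3d - 2d + 2t - 3 - 2d^2 + 2dt - 2d = -d + 2t - 3 = 2t - d - 3$. This is $\Theta(2t-d)$ for the relevant range (treating the additive constant as absorbed), so $\frac{U}{L} = 2 + \Theta\bigl(\frac{2t-d}{d(d-t+1)}\bigr)$, matching the statement.

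There is essentially no obstacle here: the corollary is explicitly a bookkeeping consequence of Theorem~\ref{th.directed_dtn} and Fact~\ref{fact:lower_degree}, and the only nontrivial step is the routine fraction simplification verifying the $\Theta$-expression, which I have sketched above. One mild point worth stating carefully in the write-up is the comparison of hypotheses (so that Theorem~\ref{th.directed_dtn} genuinely applies under the corollary's assumptions), but this is immediate.
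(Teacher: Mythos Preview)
Your approach matches the paper's exactly: the corollary is presented there as an immediate consequence of inequality~(\ref{eq:lower_bound_grado_nondiretto}) for the lower bound and Theorem~\ref{th.directed_dtn} (via $\gamma(\bg{d}{t}{n}) \leq \gamma(\dbg{d}{t}{n})$) for the upper bound, with no further argument given. One small slip in your hypothesis check: you wrote ``$n \geq 4 > 3 \geq t$'', but the corollary assumes $t \geq 3$, not $t \leq 3$; the needed condition $t < n$ for invoking Theorem~\ref{th.directed_dtn} comes instead from the standing assumption stated at the beginning of the section.
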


Notice that when $d$ goes to infinity the upper bound converges to $2$ times the lower bound unless $d-t$ is bounded above by a constant. Indeed, in the latter case the gap between the lower and upper bound can be larger with the upper bound converging  to $2+\Theta(1)$ times the lower bound.

%
\section{Domination numbers of $\dbg{d}{n}{n}$ and $\bg{d}{n}{n}$}

Here we consider the case $t=n$.  Notice that the set of sequences corresponding to the vertices of these graphs is the set of partial $n$-permutations on the set of symbols $[d]$.  Clearly, we must have $d\geq n$ and thus in this section we write $d=n+c$, for some integer $c\geq 0$. 
Recall that, from (\ref{eq:vertices})  $|V(n+c,n,n)|=\frac{(n+c)!}{c!}$; moreover, the maximum degrees of $\dbg{n+c}{n}{n}$ and of $\bg{n+c}{n}{n}$ are $c+1$ and $2c+2$, respectively.

Given $\dbg{n+c}{n}{n}$ (or $\bg{n+c}{n}{n}$) we define its \emph{$A$-partition} as the tuple of $n+1$ sets $(A_0, A_1,\ldots, A_n)$ where $A_0$ is the set of all vertices that do not contain symbol $d$ whereas for any \mbox{$1\leq i\leq n$}, $A_i$ is the set of all vertices that contain symbol $d$ in position $i$. The cardinality of $A_0$ and $A_i$ are given by the following equations:
 
\begin{equation} \label{eq:cardinality_A0_A1}
|A_0|=\frac{(n+c-1)!}{(c-1)!} \qquad \textrm{and } \qquad |A_i|=\frac{(n+c-1)!}{c!}.
\end{equation}

\subsection{Domination number of $\dbg{n+c}{n}{n}$}\label{subsec:directed_DNN}

We start by considering the special case $d=n$, (\textit{i.e.} $c=0$).  The vertices of the graph $\dbg{n}{n}{n}$ correspond to the permutations of $[n]$. For this graph we exactly determine its domination number.

\begin{theorem}\label{theo:directed_NNN}
For any integer $n\geq 2$ it holds:
$\gamma(\dbg{n}{n}{n})=\left\lceil\frac{n}{2}\right\rceil(n-1)!
$
\end{theorem}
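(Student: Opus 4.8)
The plan is to exploit the fact that $\dbg{n}{n}{n}$ is a \emph{functional digraph}: since $\Delta(\dbg{n}{n}{n})=c+1=1$, every vertex has out-degree exactly $1$, so the graph is a disjoint union of directed cycles, and the domination number of a directed cycle is easy to compute. Concretely, I would first observe that if $(a_1,\dots,a_n)$ is a permutation of $[n]$, then the only symbol $a_{n+1}$ for which $(a_2,\dots,a_n,a_{n+1})$ is again a permutation of $[n]$ is $a_{n+1}=a_1$, the unique element of $[n]$ missing from $a_2,\dots,a_n$. Hence the out-neighbour of $\mathbf{a}=(a_1,\dots,a_n)$ is the single vertex $\sigma(\mathbf a):=(a_2,\dots,a_n,a_1)$, and $\sigma$ is a bijection on $V(n,n,n)$ (the inverse being the rotation in the other direction). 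Therefore $\dbg{n}{n}{n}$ is the vertex-disjoint union of the directed cycles traced out by the orbits of $\sigma$.

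Next I would show that every orbit of $\sigma$ has length exactly $n$. If $\sigma^k(\mathbf a)=\mathbf a$ for some $1\le k<n$, then setting $g=\gcd(k,n)<n$ we would get $a_i=a_{i+g}$ for all indices taken modulo $n$, which forces $\mathbf a$ to use at most $g<n$ distinct symbols, contradicting the fact that $\mathbf a$ is a permutation of $[n]$. So every orbit has size $n$, and there are exactly $n!/n=(n-1)!$ such cycles. I would then recall (or reprove in one line) that the domination number of a directed cycle on $n$ vertices is $\lceil n/2\rceil$: the lower bound follows from (\ref{eq:grado}) since each vertex dominates only itself and its unique out-neighbour, and the upper bound follows by picking every second vertex around the cycle, which yields exactly $\lceil n/2\rceil$ vertices (with one unavoidable overlap when $n$ is odd).

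Finally, since the domination number of a vertex-disjoint union of graphs is the sum of the domination numbers of the components, I conclude $\gamma(\dbg{n}{n}{n})=(n-1)!\cdot\lceil n/2\rceil$. The only step requiring any care is the claim that each $\sigma$-orbit has full length $n$ (i.e.\ that a permutation sequence has no nontrivial rotational period); the rest of the argument — the out-degree computation, the cycle decomposition, and the domination number of a directed cycle — is routine, so I do not expect a real obstacle here.
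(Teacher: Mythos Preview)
Your proposal is correct and follows exactly the same approach as the paper: both arguments rest on the observation that $\dbg{n}{n}{n}$ is a disjoint union of $(n-1)!$ directed $n$-cycles and then use the domination number of a directed cycle. You simply supply the details (the out-degree computation, the orbit-length argument) that the paper leaves implicit.
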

\begin{proof}
To prove the claim it is sufficient to notice that $\dbg{n}{n}{n}$ is the disjoint union of $(n-1)!$ directed cycles of length $n$. Clearly, from (\ref{eq:lower_bound_grado_diretto}), to dominate each of  these cycles we need at least $\left\lceil \frac{n}{2}\right\rceil$ vertices. This number of vertices is also sufficient by taking alternately the vertices in each of the cycles of the graph. 
\end{proof}

Consider the graph $\dbg{n+c}{n}{n}$ and notice that for any vertex $v\in A_i$, with $2\leq i\leq n$, it holds $N^+(v) \subseteq A_{i-1}$.  For any $2\leq i\leq n$, we define a \emph{block} of $A_i$, as the set of vertices in $A_i$ that have the same out-neighborhood, \textit{i.e.} they differ only in the first position.
As there are only \mbox{$d-(n-1)=c+1$} possibilities to choose the first symbol of each fixed sequence, each block has cardinality $c+1$. Furthermore, for any two blocks $B$ and $B'$ of $A_i$, it must hold $B\cap B'=\emptyset$. Indeed, by definition, a block contains all the sequences coinciding in the last $n-1$ positions, so a block can be uniquely identified by a sequence of length $n-1$. Hence, given a sequence $x_1,x_2\ldots,x_{n}$, it will only belong to the block that contains the sequences $a, x_2\ldots,x_{n}$ with $a \in [d]$. Thus the blocks form a partition of the vertices in $A_i$. 
Finally, from these arguments and Equation~\ref{eq:cardinality_A0_A1} we deduce that each $A_i$ can be partitioned in exactly $\frac{(n+c-1)!}{(c+1)!}$ blocks (notice that for any $n\geq 2$ this fraction is an integer). 

We prove the following lemma.

\begin{lemma} \label{lem:a-partition-directed}
Let $(A_0,A_1,\ldots, A_n)$ be the $A$-partition of $\dbg{n+c}{n}{n}$, for any set  $A_i$, $2\leq i\leq n$, there exists a subset $S_i \subset A_i$  of $\frac{|A_i|}{c+1} = \frac{(n+c-1)!}{(c+1)!}$ vertices that dominate all the vertices in $A_{i-1}$.
\end{lemma}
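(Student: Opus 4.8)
The plan is to exploit the block structure of $A_i$ together with the observation that, for $2\le i\le n$, the out-neighbourhood of any vertex of $A_i$ lies entirely in $A_{i-1}$, and that all vertices in one block of $A_i$ share the same out-neighbourhood. So what I really want is a set of blocks of $A_i$ — one representative vertex per chosen block — whose combined out-neighbourhoods cover $A_{i-1}$. Since $A_i$ has exactly $\frac{(n+c-1)!}{(c+1)!}$ blocks (as established just before the lemma) and I am allowed exactly $\frac{(n+c-1)!}{(c+1)!}$ vertices, the claim is equivalent to saying: picking one representative from \emph{every} block of $A_i$ already dominates all of $A_{i-1}$. Thus the first step is to fix, for each block $B$ of $A_i$, an arbitrary representative vertex $v_B$, and set $S_i=\{v_B : B \text{ a block of } A_i\}$; then $|S_i| = \frac{(n+c-1)!}{(c+1)!} = \frac{|A_i|}{c+1}$ automatically.

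The second and main step is to verify that $S_i$ dominates $A_{i-1}$. Take an arbitrary vertex $w=(w_1,\dots,w_n)\in A_{i-1}$, so $w_{i-1}=d$. I must produce a vertex in $S_i$ whose (unique, since these are permutation-type sequences) out-neighbour is $w$. A predecessor of $w$ in the de Bruijn sense has the form $u=(a, w_1,\dots,w_{n-1})$ for some symbol $a$; note that shifting moves the $d$ from position $i-1$ of $w$ to position $i$ of $u$, so $u\in A_i$ as soon as $u$ is a valid ($n$-constrained) vertex. The set of valid such $u$ — i.e.\ the choices of $a$ making $(a,w_1,\dots,w_{n-1})$ an $n$-permutation — is nonempty (there are $c+1$ of them, namely $a\in[d]\setminus\{w_1,\dots,w_{n-1}\}$), and crucially all of them form a single block $B$ of $A_i$, since they agree on positions $2,\dots,n$. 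Hence the representative $v_B\in S_i$ is one of these predecessors of $w$, so $w\in N^+(v_B)$ and $w$ is dominated. As $w$ was arbitrary, $S_i$ dominates $A_{i-1}$.

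The only point that needs a little care — and the closest thing to an obstacle — is checking that the block $B$ I produced for $w$ really is a block of $A_i$ in the sense defined, i.e.\ that every predecessor $u=(a,w_1,\dots,w_{n-1})$ of $w$ is genuinely a vertex of $\dbg{n+c}{n}{n}$ (an $n$-constrained sequence) and that these $u$ all share the same out-neighbourhood. Both follow immediately: $u$ is $n$-constrained iff its $n$ symbols are distinct, which holds exactly for $a\notin\{w_1,\dots,w_{n-1}\}$; and any two such $u$ differ only in the first coordinate, so they shift to the same sequence $w$, i.e.\ have the same (singleton) out-neighbourhood, which is precisely the defining property of a block. Since we argued earlier that the blocks partition $A_i$, the block containing these predecessors is well defined and $v_B$ is already in $S_i$, completing the argument. $\qed$
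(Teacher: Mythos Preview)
Your proof is correct and follows essentially the same approach as the paper: choose one representative from each block of $A_i$, then for an arbitrary $w\in A_{i-1}$ observe that its in-neighbours $(a,w_1,\dots,w_{n-1})$ all lie in a single block of $A_i$, so the chosen representative of that block dominates $w$. One small inaccuracy worth fixing: the out-neighbourhood of a vertex in $\dbg{n+c}{n}{n}$ is not a singleton unless $c=0$ (it has size $c+1$), so your parenthetical remarks ``unique'' and ``singleton'' are misstated; what matters, and what you in fact use, is that all the predecessors of $w$ share the \emph{same} out-neighbourhood (which contains $w$), not that this neighbourhood has size one.
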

\begin{proof}
Fix an integer $i$ such that $2\leq i\leq n$ and let $B_1, \ldots, B_{r}$ be the blocks of $A_i$. We define $S_i$ by choosing any vertex from each block $B_j$, with $1\leq j \leq r$. Clearly, $|S_i|=r=\frac{(n+c-1)!}{(c+1)!}$ and it remains to show that $S_i$ dominates $A_{i-1}$. Let $\mathbf{x}=(x_1,\ldots, x_{i-2},d,x_i, \ldots, x_n)$ be a vertex in $A_{i-1}$; $\mathbf{x}$ is dominated by any vertex of the type $(a, x_1,\ldots, x_{i-2},d,x_i, \ldots, x_{n-1})$ where $a$ is a symbol not occurring in the first $n-1$ positions of $\textbf{x}$ and $d$ is in the position $i$. Hence, every vertex in $A_{i-1}$ is dominated by some vertex in $A_i$. Any two vertices in the same block of $A_i$ dominate the same vertices in $A_{i-1}$ and hence $S_i$ dominates $A_{i-1}$.
\end{proof}

The following theorem is the main result of this subsection.

\begin{theorem}\label{theo:directed_NCNN}
For  any integer $c\geq 1$ and  $n \geq 2$ it holds:
\begin{align}
\notag
\frac{1}{c+2}\frac{(n+c)!}{c!} &\leq \gamma(\dbg{n+c}{n}{n}) \\ \notag
& \leq  \frac{(n+c-1)(n+c-1)!}{(c+1)!} = \left( 1+\Theta\Big(\frac{1}{c}\Big)\right) \frac{1}{c+2}\frac{(n+c)!}{c!}. 
\end{align}
\end{theorem}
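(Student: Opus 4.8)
The plan is to get the lower bound directly from the degree bound (\ref{eq:lower_bound_grado_diretto}) and the upper bound from an explicit dominating set assembled out of the $A$-partition and Lemma~\ref{lem:a-partition-directed}.

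\emph{Lower bound.} First I would instantiate (\ref{eq:lower_bound_grado_diretto}) with $d=n+c$ and $t=n$, so that $d-t=c$ and $n-t+1=1$; the right-hand side becomes $\bigl\lceil \tfrac{(n+c)!}{(c+2)!}(c+1)\bigr\rceil$. Since $t=n\ge 2$, by the remark following Fact~\ref{fact:lower_degree} the ceiling is redundant, and $\tfrac{(n+c)!\,(c+1)}{(c+2)!}=\tfrac{1}{c+2}\tfrac{(n+c)!}{c!}$, which is the claimed bound.

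\emph{Upper bound.} Let $(A_0,A_1,\dots,A_n)$ be the $A$-partition of $\dbg{n+c}{n}{n}$, and take $S=A_1\cup\bigcup_{i=3}^{n}S_i$, where the whole class $A_1$ is included and, for $3\le i\le n$, $S_i\subset A_i$ is the set of $\tfrac{(n+c-1)!}{(c+1)!}$ vertices dominating $A_{i-1}$ furnished by Lemma~\ref{lem:a-partition-directed}. These pieces lie in pairwise distinct classes of the partition, hence are pairwise disjoint, so by (\ref{eq:cardinality_A0_A1})
$$|S|=\frac{(n+c-1)!}{c!}+(n-2)\frac{(n+c-1)!}{(c+1)!}=\frac{(n+c-1)!}{(c+1)!}\bigl((c+1)+(n-2)\bigr)=\frac{(n+c-1)(n+c-1)!}{(c+1)!}.$$
To verify that $S$ is dominating: the vertices of $A_2\cup\dots\cup A_{n-1}$ are covered by $S_3,\dots,S_n$ by construction; a vertex of $A_1$ dominates itself, so $A_1$ is covered; and given $\mathbf{x}=(x_1,\dots,x_n)\in A_0$ (resp.\ $\mathbf{x}=(x_1,\dots,x_{n-1},d)\in A_n$), the sequence $(d,x_1,\dots,x_{n-1})$ is a legitimate vertex in $A_1$ — its symbols are distinct because $d$ does not occur among $x_1,\dots,x_{n-1}$ — and $\mathbf{x}$ lies in its out-neighbourhood, obtained by appending $x_n$ (resp.\ $d$). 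Hence $S$ dominates every class, and $\gamma(\dbg{n+c}{n}{n})\le\tfrac{(n+c-1)(n+c-1)!}{(c+1)!}$.

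\emph{Asymptotics and the main point.} Dividing the two bounds gives $\tfrac{(n+c-1)(c+2)}{(n+c)(c+1)}=1+\tfrac{n-2}{(c+1)(n+c)}=1+\Theta(1/c)$, the stated form. There is no deep obstacle here; the one observation driving the construction is that the ``front-symbol-$d$'' class $A_1$ is self-dominating and has its entire out-neighbourhood inside $A_0\cup A_n$, so it exactly absorbs the boundary triple $(A_0,A_1,A_n)$ of the $A$-partition while Lemma~\ref{lem:a-partition-directed} handles $A_2,\dots,A_{n-1}$, the two contributions summing precisely to the formula. The only remaining care is bookkeeping — disjointness of the pieces, the fact that the auxiliary vertices $(d,x_1,\dots,x_{n-1})$ are always well-defined $n$-constrained sequences, and the degenerate situations ($n=2$, where $\bigcup_{i=3}^nS_i=\emptyset$ and $S=A_1$, and $c=1$) — all of which fall out of the same argument. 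This mirrors the dominating set used for Theorem~\ref{th.directed_dtn}, specialised to $t=n$.
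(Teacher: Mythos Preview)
Your proof is correct and essentially identical to the paper's own argument: the lower bound comes from (\ref{eq:lower_bound_grado_diretto}), and the upper bound uses the very same dominating set $S=A_1\cup\bigcup_{i=3}^{n}S_i$, with the same verification that $A_1$ handles $A_0\cup A_1\cup A_n$ while the $S_i$ from Lemma~\ref{lem:a-partition-directed} handle $A_2,\dots,A_{n-1}$, and the same cardinality computation. Your additional remarks on disjointness, well-definedness of $(d,x_1,\dots,x_{n-1})$, and the degenerate case $n=2$ are correct elaborations that the paper leaves implicit.
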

\begin{proof}
{\em Lower bound. } It follows from (\ref{eq:lower_bound_grado_diretto}).

{\em Upper bound. } To prove the upper bound, let $(A_0,A_1,\ldots, A_n)$ be the $A$-partition of the graph $\dbg{n+c}{n}{n}$. For any $3\leq i\leq n$, we consider set $S_i \subset A_i$ as in Lemma~\ref{lem:a-partition-directed}. 
We define the following set 
$$S=\Big(\bigcup_{3\leq i\leq n}S_i\Big) \cup A_1.$$
We show that $S$ is a dominating set.  First, all the vertices in $A_2,\ldots, A_{n-1}$ are dominated by the vertices in $S_3, \ldots, S_n$ using Lemma~\ref{lem:a-partition-directed}. 
The vertices in  $A_1$ are dominated by themselves in $S$ and the vertices in $A_0 \cup A_n$ are dominated by the vertices in $A_1$. Indeed, every vertex $(x_1,x_2,\ldots, x_n) \in A_0$ is dominated by $(d,x_1,\ldots, x_{n-1})$ that belongs in $A_1$ and each vertex $(x_1,\ldots, x_{n-1},d)$ in $A_n$ is dominated by $(d,x_1,\ldots, x_{n-1}) \in A_1$.  From Lemma~\ref{lem:a-partition-directed} and Equation~\ref{eq:cardinality_A0_A1} the number of vertices in $S$ is given by the following:

\begin{eqnarray*}
(n-2)\frac{(n+c-1)!}{(c+1)!} + \frac{(n+c-1)!}{c!}  & =& \left( \frac{n-2}{(n+c)(c+1)}+ \frac{1}{n+c}\right)\frac{(n+c)!}{c!}\\
&=& \biggr( 1+ \frac{1}{c+1} - \frac{c+2}{(n+c)(c+1)}\biggl) \frac{(n+c)!}{(c+2)c!} \\
&=& \left( 1+\Theta\Big(\frac{1}{c}\Big)\right) \frac{1}{c+2}\frac{(n+c)!}{c!}
\end{eqnarray*}
\end{proof}

\subsection{Domination number of $\bg{n+c}{n}{n}$}\label{subsec:undirected_DN+cN}
Similarly as in the previous section  we start by considering the special case $c=0$.  For this graph we exactly determine its domination number.

\begin{theorem}\label{theo:undirected_NNN}
For any integer $n\geq 2$ it holds:
$\gamma(\bg{n}{n}{n})=\left\lceil\frac{n}{3}\right\rceil(n-1)!$

\end{theorem}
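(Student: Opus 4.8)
The plan is to follow the template of the proof of Theorem~\ref{theo:directed_NNN}: first pin down the structure of $\bg{n}{n}{n}$ as a disjoint union of cycles, and then invoke the classical formula for the domination number of a single cycle.

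First I would describe the structure. Since $d=n$, every vertex of $\dbg{n}{n}{n}$ is an honest permutation $(x_1,\ldots,x_n)$ of $[n]$. Any out-neighbour must have the form $(x_2,\ldots,x_n,b)$ and also be a permutation of $[n]$, which forces $b=x_1$; hence each vertex has the unique out-neighbour $(x_2,\ldots,x_n,x_1)$, the cyclic left shift, and symmetrically the unique in-neighbour $(x_n,x_1,\ldots,x_{n-1})$. Therefore $\dbg{n}{n}{n}$ is a disjoint union of directed cycles, each consisting of the $n$ cyclic rotations of a fixed permutation (these $n$ rotations are pairwise distinct, since a nontrivial rotation fixing a sequence would force that sequence to be periodic with period dividing $n$ properly, contradicting that it is a permutation of $[n]$). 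Thus there are $n!/n=(n-1)!$ such cycles. Passing to the undirected graph $\bg{n}{n}{n}$, and recalling that loops and multiple edges are discarded, for $n\ge 3$ each of these directed $n$-cycles becomes an undirected cycle $C_n$, because the left and right cyclic shifts of a permutation of at least three symbols are distinct vertices. Hence $\bg{n}{n}{n}$ is the disjoint union of $(n-1)!$ copies of $C_n$. The base case $n=2$ must be treated separately: there $\bg{2}{2}{2}$ is a single edge $K_2$, with $\gamma(K_2)=1=\bigl\lceil\frac{2}{3}\bigr\rceil(2-1)!$, so the stated formula holds there too.

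Then I would combine two standard facts: the domination number is additive over connected components, and $\gamma(C_n)=\bigl\lceil\frac{n}{3}\bigr\rceil$ (a classical result, which can be cited). Together these give, for $n\ge 3$, $\gamma(\bg{n}{n}{n})=(n-1)!\cdot\gamma(C_n)=\bigl\lceil\frac{n}{3}\bigr\rceil(n-1)!$, and with the $n=2$ case this establishes the claim for all $n\ge 2$. Note that this simultaneously yields the lower and upper bound, so no separate estimates are needed.

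The argument is essentially structural, so I do not expect a substantial obstacle; the only points requiring a little care are making the cycle decomposition precise (in particular that every component is genuinely a $C_n$, with no degenerate components, and that the left/right shifts coincide only when $n=2$) and correctly disposing of the base case $n=2$. The cycle domination formula $\gamma(C_n)=\lceil n/3\rceil$ itself is standard and I would simply quote it.
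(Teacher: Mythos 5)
Your proposal is correct and follows essentially the same route as the paper: both identify $\bg{n}{n}{n}$ as a disjoint union of $(n-1)!$ cycles of length $n$ and then apply the domination number of a cycle, $\left\lceil\frac{n}{3}\right\rceil$, to each component. Your write-up is in fact somewhat more careful than the paper's, since you justify why the rotations are distinct and you dispose of the degenerate case $n=2$ explicitly.
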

\begin{proof}
To prove the claim it is sufficient to notice that $\bg{n}{n}{n}$ is the disjoint union of $(n-1)!$ undirected cycles of length $n$. From (\ref{eq:grado}) to dominate a single cycle we need at least $\left\lceil \frac{n}{3}\right\rceil$ vertices. A dominating set of this cardinality can be obtained by choosing one vertex out of three in any traversal of the cycle.
\end{proof}

\noindent
We consider now graph  $\bg{n+c}{n}{n}$, $c \geq 1$.  Note that, for any vertex $v\in A_i$, with $2\leq i\leq n-1$, it holds  $N(v) \subseteq A_{i-1}\cup A_{i+1}$. We define now the blocks of $A_i$ in the undirected case. For any $2\leq i \leq n-1$ a set $B\subset A_i$ is called a \emph{$u$-block}
of $A_i$ if any two vertices of it differ only in the first or last position. 
More formally, fixing a subsequence $\textbf{z} \in [d]^{n-2}$, a $u$-block of $A_i$ is a set of vertices of the type $x_1\cdot \textbf{z} \cdot x_2$. Notice that each block $B$ of $A_i$ can be identified by the $(n-2)$-length substring that is shared among all the sequences in $B$, and hence the $u$-blocks form a partition of $A_i$. 
Furthermore, once we fix the sequence $\textbf{z}$ that is shared among the sequences in $B$, $x_1, x_2$ must be chosen among the $d-(n-2)=c+2$ elements that do not appear in $\textbf{z}$ and hence there are exactly $(c+2)(c+1)$ possibilities to choose $x_1, x_2$. 
Thus, each $u$-block has cardinality  $(c+2)(c+1)$ and for any $2\leq i \leq n-1$, from Equation~\ref{eq:cardinality_A0_A1} there are exactly $\frac{|Ai|}{(c+2)(c+1)}=\frac{(n+c-1)!}{(c+2)!}$ blocks in $A_i$.

We now prove a result similar to Lemma~\ref{lem:a-partition-directed} in the undirected case. 

\begin{lemma} \label{lem:a-partition_undirected}

Let $(A_0,A_1,\ldots, A_n)$ be the $A$-partition of $\bg{n+c}{n}{n}$,  for any set  $A_i$, $2\leq i\leq  n-1$ there exists a subset $S_i \subset A_i$  of $\frac{(n+c-1)!}{(c+1)!}$ vertices that dominates all the vertices in $A_{i-1} \cup A_{i+1}$.
\end{lemma}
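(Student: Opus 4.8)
The plan is to mimic the directed case from Lemma~\ref{lem:a-partition-directed}, but now the catch is that a vertex $v\in A_i$ must dominate vertices in both $A_{i-1}$ and $A_{i+1}$, so I would work with the $u$-blocks introduced just above. First I would fix $i$ with $2\leq i\leq n-1$ and recall that $A_i$ is partitioned into $r=\frac{(n+c-1)!}{(c+2)!}$ $u$-blocks, each of size $(c+2)(c+1)$, where a $u$-block consists of all sequences $x_1\cdot\textbf{z}\cdot x_2$ for a fixed subsequence $\textbf{z}\in[d]^{n-2}$ carrying the symbol $d$ in the appropriate relative position. The key observation is that all $(c+2)(c+1)$ vertices of a single $u$-block share the same closed neighborhood inside $A_{i-1}\cup A_{i+1}$: a sequence $y\cdot x_1\cdot\textbf{z}$ (lying in $A_{i-1}$ or $A_{i+1}$ depending on where $d$ sits) is adjacent to $x_1\cdot\textbf{z}\cdot x_2$, and symmetrically $\textbf{z}\cdot x_2\cdot y'$, and these neighborhoods only depend on $\textbf{z}$, not on the particular choice of $x_1,x_2$.

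The next step is the actual counting trick: I want $S_i$ to have size $\frac{(n+c-1)!}{(c+1)!}=(c+2)\cdot r$, i.e. I am allowed to pick $c+2$ vertices from each $u$-block rather than just one. So within each $u$-block indexed by $\textbf{z}$, I would choose $c+2$ vertices whose first coordinates $x_1$ run over all $c+2$ symbols of $[d]\setminus(\text{symbols of }\textbf{z})$ — say the vertices $x_1\cdot\textbf{z}\cdot\sigma(x_1)$ for some fixed convenient assignment of the second free coordinate (for instance $\sigma(x_1)$ the smallest admissible symbol distinct from $x_1$). I would then verify that this $S_i$ dominates every vertex of $A_{i-1}\cup A_{i+1}$. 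Take $\textbf{w}\in A_{i-1}$; it has the shape $\textbf{w}=u\cdot\textbf{z}$ for a suitable $(n-1)$-prefix of the form $u\cdot(\text{first }n-2\text{ entries of }\textbf{z})$ — more carefully, $\textbf{w}$ restricted to positions $2,\dots,n$ agrees with the length-$(n-1)$ string that, together with one extra free symbol in front, determines which $u$-block of $A_i$ it is adjacent to; since that $u$-block is represented in $S_i$ by vertices covering all $c+2$ possible leading symbols, and the adjacency $\textbf{w}\sim x_1\cdot\textbf{z}\cdot x_2$ holds exactly when $x_1$ equals the appropriate symbol already present at the front of $\textbf{w}$'s shifted copy, one of the $c+2$ chosen vertices is adjacent to $\textbf{w}$. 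The argument for $\textbf{w}\in A_{i+1}$ is the mirror image, using the last coordinate $x_2$ as the free symbol and hence requiring the chosen representatives to cover all $c+2$ values of $x_2$ as well.

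This last point is where I expect the real obstacle: picking $c+2$ vertices per $u$-block must simultaneously (a) realize all $c+2$ values in the \emph{first} coordinate (to dominate $A_{i-1}$) and (b) realize all $c+2$ values in the \emph{last} coordinate (to dominate $A_{i+1}$). That is a bipartite matching / system-of-distinct-representatives condition on the $(c+2)\times(c+2)$ grid of admissible $(x_1,x_2)$ pairs (the off-diagonal entries, since $x_1\neq x_2$ is forced for $t=n$), and it is solvable precisely because a $(c+2)\times(c+2)$ matrix minus its diagonal still has a perfect matching for $c+2\geq 2$, i.e. $c\geq 0$; I would invoke this combinatorial fact (or exhibit the explicit cyclic assignment $x_2=x_1+1 \bmod (c+2)$ on the free symbols) to pin down the representatives. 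Finally I would tally $|S_i|=(c+2)\cdot\frac{(n+c-1)!}{(c+2)!}=\frac{(n+c-1)!}{(c+1)!}$ as claimed, and check disjointness of the contributions is automatic since $S_i\subseteq A_i$ and the $A_i$'s are pairwise disjoint. The boundary sets $A_0$ and $A_n$ are deliberately left out of this lemma, exactly as in the directed case, and will be handled separately in the theorem that follows.
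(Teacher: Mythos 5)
Your proposal is correct and takes essentially the same route as the paper: the same $u$-block partition of $A_i$, the same idea of selecting $c+2$ representatives per block realizing every admissible symbol once in the first position and once in the last (the paper's set $C_j=\{y_2\cdot\textbf{z}\cdot y_1,\ldots,y_1\cdot\textbf{z}\cdot y_s\}$ is exactly your cyclic assignment), and the same cardinality count $(c+2)\cdot\frac{(n+c-1)!}{(c+2)!}=\frac{(n+c-1)!}{(c+1)!}$. Two cosmetic slips that do not affect the argument: your opening ``key observation'' that all vertices of a $u$-block share the same neighborhood in $A_{i-1}\cup A_{i+1}$ is false (the $A_{i+1}$-side neighbors depend on $x_1$ and the $A_{i-1}$-side on $x_2$ --- which is precisely why the matching condition you then impose is needed), and you swap which of the two coordinates serves $A_{i-1}$ versus $A_{i+1}$, which is immaterial since the construction covers both.
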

\begin{proof}
Fix an $2\leq i \leq n-1 $ and let $B_1,\ldots, B_r$ be the $u$-block partition of $A_i$. 
Consider $B_j$ with $1\leq j \leq r$, let $\textbf{z}$ be the shared sequence in $B_j$ and let  $y_1<y_2<\ldots <y_s$ be the $s=c+2$ possible symbols for the first and last position of the sequences in $B_j$. 
We define a set $C_j \subset B_j$ as follows:
$$C_j=\{y_2\cdot \textbf{z} \cdot y_1, \quad  y_3 \cdot \textbf{z} \cdot y_2,\quad \ldots, \quad y_s \cdot \textbf{z} \cdot y_{s-1},\quad y_1\cdot \textbf{z} \cdot y_s \}$$

We define $$S_i=\bigcup_{1\leq j \leq r} C_j. $$ 
Clearly, as  $|C_j|=s=c+2$ and the $u$-blocks form a partition of $A_i$, we have $|S_i|=(c+2) \frac{(n+c-1)!}{(c+2)!} = \frac{(n+c-1)!}{(c+1)!}$. 
It remains to show that $S_i$ dominates all the vertices in $A_{i-1} \cup A_{i+1}$.

 \begin{itemize}
 \item $S_i$ dominates $A_{i-1}$:  Consider an arbitrary vertex $\textbf{x}=(x_1,\ldots, x_n) \in A_{i-1}$ with $x_{i-1}=d$; it can be dominated by any vertex $(a, x_1,\ldots, x_{n-1}) \in A_{i}$ for some symbol $a$ that does not appear in the $(n-1)$-prefix of $\textbf{x}$. 
 Sequence $(a, x_1, \ldots, x_{n-1})$ can be written in the form $a \cdot \textbf{z} \cdot x_{n-1}$ which belongs to the $u$-block $B_j$ whose sequences share subsequence $\textbf{z}$. 
 Clearly, by construction of $C_j$, for any fixed $\textbf{z} \cdot x_{n-1}$ there always exists an $a$ for which $a \cdot \textbf{z} \cdot x_{n-1} \in C_j$.  
 
\item $S_i$ dominates $A_{i+1}$:  The argument is similar to the previous item. Let  $\textbf{x}=(x_1, \ldots, x_n) \in A_{i+1}$ with $x_{i+1}=d$; $\textbf{x}$ can be dominated by any vertex $(x_2, \ldots, x_{n}, a) \in A_{i}$ for some symbol $a$ that does not appear in the $(n-1)$-suffix of $\textbf{x}$. 
Sequence $(x_2, \ldots, x_{n}, a)$ can be written in the form $x_2\cdot  \textbf{z} \cdot a$ which belongs to the block $B_j$ whose sequences share the subsequence $\textbf{z}$. 
Again, by construction of $C_j$, for any fixed $x_2 \cdot \textbf{z}$ there always exists an $a$ for which $ x_2\cdot \textbf{z} \cdot a \in C_j$.
\end{itemize}
\end{proof}

\noindent
Since the case $n=2$ is solved exactly by Theorem~\ref{theo:kautz_n_2}, in the following theorem we assume $n\geq 3$. 

\begin{theorem}\label{theo:undirected_NCNN}
For any integer $c\geq 1$ and $ n \geq 3$ it holds:
{\small
\begin{align*}
\frac{1}{2c+3}\frac{(n+c)!}{c!} &\leq \gamma(\bg{n+c}{n}{n}) \\ \notag
& \leq  \left(1+ \frac{1}{2c+2}+ \frac{4}{n}+\frac{2}{n(c+1)} -  \frac{(2c+3)(n+6)(n-1)!c!}{2(n+c)!}\right) \frac{1}{2c+3}\frac{(n+c)!}{c!}\\\notag
&= \biggl(1+ \Theta\Big(\frac{1}{c}+ \frac{1}{n}\Big)\biggr) \frac{1}{2c+3}\frac{(n+c)!}{c!}\notag
\end{align*}
}
\end{theorem}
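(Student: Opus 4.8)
The lower bound is immediate from (\ref{eq:lower_bound_grado_nondiretto}): since $|V(n+c,n,n)|=\frac{(n+c)!}{c!}$ and $\Delta(\bg{n+c}{n}{n})\le 2c+2$, we get $\gamma(\bg{n+c}{n}{n})\ge\frac{1}{2c+3}\frac{(n+c)!}{c!}$. For the upper bound the plan is to build a dominating set along the $A$-partition $(A_0,A_1,\dots,A_n)$, dominating $A_1\cup\cdots\cup A_n$ with the help of Lemma~\ref{lem:a-partition_undirected} and dominating $A_0$ by recursion on $c$. The structural fact that makes the recursion work is that the subgraph of $\bg{n+c}{n}{n}$ induced by $A_0$ is isomorphic to $\bg{n+c-1}{n}{n}$: removing the symbol $d=n+c$ leaves exactly the $n$-permutations of $[n+c-1]$ with the same shift adjacency, so a dominating set of $\bg{n+c-1}{n}{n}$, placed inside $A_0$, dominates all of $A_0$. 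The recursion bottoms out at $c=1$, where $A_0\cong\bg{n}{n}{n}$ is a disjoint union of $(n-1)!$ cycles of length $n$, with domination number $\lceil n/3\rceil(n-1)!$ given by Theorem~\ref{theo:undirected_NNN}.

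To dominate $A_1\cup\cdots\cup A_n$ I use Lemma~\ref{lem:a-partition_undirected}: for each interior index $i\in\{2,\dots,n-1\}$ there is a set $S_i\subset A_i$ with $|S_i|=\frac{(n+c-1)!}{(c+1)!}$ dominating $A_{i-1}\cup A_{i+1}$. Hence it suffices to pick a set of indices $I\subseteq\{2,\dots,n-1\}$ with $\{\,i-1,\ i+1 : i\in I\,\}\supseteq\{1,\dots,n\}$ and then take $\bigcup_{i\in I}S_i$. Covering layer $A_1$ forces $2\in I$ and covering $A_n$ forces $n-1\in I$, while any four consecutive layers are covered by two well-placed sets (the pattern $S_{4k+2}$ covers $\{4k+1,4k+3\}$ and $S_{4k+3}$ covers $\{4k+2,4k+4\}$); a short case analysis on $n\bmod 4$ then shows $I$ can be chosen with $|I|=p(n)=\tfrac n2+O(1)$ (one can arrange $|I|\le\tfrac{n+2}{2}$). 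So $A_1\cup\cdots\cup A_n$ is dominated by $p(n)\cdot\frac{(n+c-1)!}{(c+1)!}$ vertices.

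Unrolling the recursion over $c$ gives a dominating set of size at most $\sum_{j=1}^{c} p(n)\,\frac{(n+j-1)!}{(j+1)!}+\lceil n/3\rceil (n-1)!$. The sum collapses by the hockey-stick identity: writing $\frac{(n+j-1)!}{(j+1)!}=(n-2)!\binom{n+j-1}{n-2}$ yields $\sum_{j=1}^{c}\frac{(n+j-1)!}{(j+1)!}=(n-2)!\bigl(\binom{n+c}{n-1}-n\bigr)=\frac{(n+c)!}{(n-1)(c+1)!}-\frac{n!}{n-1}$. Substituting the bounds $p(n)\le\frac{n+2}{2}$ and $\lceil n/3\rceil\le\frac{n+2}{3}$, simplifying, and factoring out $\frac{1}{2c+3}\frac{(n+c)!}{c!}$ produces the claimed coefficient $1+\frac{1}{2c+2}+\frac4n+\frac{2}{n(c+1)}-\frac{(2c+3)(n+6)(n-1)!\,c!}{2(n+c)!}=1+\Theta\bigl(\tfrac1c+\tfrac1n\bigr)$. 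Here the main term $\tfrac12\cdot\frac{(n+c)!}{(c+1)!}=\frac{1}{2c+3}\frac{(n+c)!}{c!}\bigl(1+\frac{1}{2c+2}\bigr)$ comes from $p(n)\approx n/2$ times the collapsed sum, the $\frac1n$-terms absorb the slack between $p(n)$ and $n/2$, and the negative $(n-1)!$-term absorbs $-p(n)\frac{n!}{n-1}$ together with the base-case contribution and the rounding.

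The conceptually easy ingredients are the $A$-partition recursion and the index-covering argument built on Lemma~\ref{lem:a-partition_undirected}; the delicate part is the bookkeeping of the lower-order terms — pinning down $p(n)$ for each residue of $n\bmod 4$, carrying the ceilings through the recursion and the base case $c=1$, and massaging the additive corrections into the single closed-form coefficient above. One must also treat the smallest admissible dimensions separately, since the "blocks of four" pattern needs enough interior indices: for $n=3$ only $S_2$ exists (dominating $A_1\cup A_3$), so $A_2$ — which has no internal edges — has to be dominated by an ad hoc set of $c+2$ vertices of $A_1$, and similar patches are required for $n=4,5$. These low-dimensional checks and the constant-chasing are routine but somewhat lengthy, and I expect them to be the main obstacle.
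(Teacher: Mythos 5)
Your proposal follows essentially the same route as the paper: the lower bound from (\ref{eq:lower_bound_grado_nondiretto}), a cover of $A_1,\dots,A_n$ by roughly $n/2$ of the sets $S_i$ from Lemma~\ref{lem:a-partition_undirected}, recursion on $c$ via the isomorphism of the subgraph induced by $A_0$ with $\bg{n+c-1}{n}{n}$, the base case from Theorem~\ref{theo:undirected_NNN}, and the hockey-stick collapse of the resulting sum. The only substantive difference is in the tail of the layer cover: where you resort to ad hoc patches for small $n$ (in particular $n=3$, where $A_2$ cannot be reached from interior indices), the paper instead invokes Lemma~\ref{lem:a-partition-directed} to obtain a set $S_n\subset A_n$ of the same size $\frac{(n+c-1)!}{(c+1)!}$ dominating $A_{n-1}$, which handles all residues of $n$ (and $n=3$) uniformly and spares the case analysis you anticipate as the main obstacle.
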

\begin{proof}

{\em Lower bound. } It follows from (\ref{eq:lower_bound_grado_nondiretto}). 

{\em Upper bound. }To prove the upper bound let $(A_0,A_1,\ldots, A_n)$  be the $A$-partition of $\bg{n+c}{n}{n}$. Using Lemma~\ref{lem:a-partition_undirected} for any $2\leq i\leq n-2$, there exist $S_i\subset A_i$ and  $S_{i+1} \subset A_{i+1}$ such that $S_i$ dominates $A_{i-1}\cup A_{i+1}$ and $S_{i+1}$ dominates $A_{i}\cup A_{i+2}$. Hence, the vertices in $S_i\cup S_{i+1}$ dominate all the vertices in $A_{i-1} \cup A_{i} \cup A_{i+1} \cup A_{i+2}$. We thus consider the sequence of sets $(A_1,\ldots, A_n)$ and we subdivide it in subsequences of exactly 4 sets except possibly the last one which may have less than $4$ sets.  For each such subsequence $(A_i, A_{i+1}, A_{i+2}, A_{i+3})$, using the previous argument we can dominate its vertices by taking set $S_{i+1} \cup S_{i+2}$ of $2\frac{(n+c-1)!}{(c+1)!}$ of vertices.  
If $n$ is not a multiple of $4$, we can distinguish two cases depending on the number of sets this subsequence contains: 
\begin{itemize}
    \item[(i)] the last subsequence contains only set $A_n$: it is sufficient to take $S_{n-1} \subset A_{n-1}$ of  $\frac{(n+c-1)!}{(c+1)!}$ vertices as indicated by Lemma~\ref{lem:a-partition_undirected}.
    
    \item[(ii)] the last subsequence contains two or three sets: we have $(A_{n-1}, A_{n})$ or $(A_{n-2},A_{n-1}, A_{n})$, respectively. 
    In both cases, by Lemma~\ref{lem:a-partition_undirected}, we can find $S_{n-1} \subset A_{n-1}$ that dominates $A_{n-2} \cup A_{n}$ and, by Lemma~\ref{lem:a-partition-directed}, we can find $S_n\subset A_{n}$ that dominates $A_{n-1}$. Thus, with $2\frac{(n+c-1)!}{(c+1)!}$ vertices in $S_{n-1}\cup S_n$ it is possible to dominate all the vertices in $A_{n-2}\cup A_{n-1} \cup A_{n}$.
\end{itemize}

So, all the vertices in $A_1\cup A_2 \cup \ldots, A_n$ can be dominated by a set of vertices $S'$ of cardinality: 
$$|S'|=\left\lceil\frac{n}{4}\right\rceil 2\frac{(n+c-1)!}{(c+1)! }\leq \frac{n+4}{2}\frac{(n+c-1)!}{(c+1)!}.$$ 

It remains to show how to dominate the vertices of $A_0$. 
Notice that the subgraph of $\bg{n+c}{n}{n}$ with $c\geq 1$ induced by the vertices in $A_0$, is isomorphic to the graph $\bg{n+c-1}{n}{n}$. Thus, to dominate $A_0$, we can iterate the previous argument. Let $T(n+c,n,n)$ be the number of vertices in a dominating set $\bg{n+c}{n}{n}$. We have:

$$T(n+c,n,n)\leq\left\{
\begin{array}{ll}
\left\lceil\frac{n}{3}\right\rceil(n-1)! & \mbox{if $c=0$}\\
T(n+c-1,n,n)+     \frac{n+4}{2}\frac{(n+c-1)!}{(c+1)!}      & \mbox{otherwise}\\
\end{array}
\right.
$$
where the case $c=0$ follows from  Theorem~\ref{theo:undirected_NNN}. Using substitution we have:
\begin{equation}\label{uno}
T(n+c,n,n)\leq \left\lceil\frac{n}{3}\right\rceil(n-1)! +\frac{n+4}{2} \sum_{i=1}^{c}\frac{(n+i-1)!}{(i+1)!}
\end{equation}

We use now the following equality that can be proved by induction. 
\begin{equation*}
\sum_{i=0}^h\frac{(n+i-1)!}{(i+1)!}=\frac{(h+n)!}{n(h+1)!}
\end{equation*}

Using the above equation we can solve the recurrence for $T(n+c,n,n)$ as follows.

\begin{align*}
T(n+c,n,n) & \leq \frac{n+4}{2}\frac{(n+c)!}{n(c+1)!} - \frac{n+4}{2}(n-1)! + \frac{n+3}{3}(n-1)! \nonumber\\
&=\left(1+ \frac{1}{2c+2}+ \frac{4}{n}+\frac{2}{n(c+1)} -  \frac{(2c+3)(n+6)(n-1)!c!}{2(n+c)!}\right) \frac{1}{2c+3}\frac{(n+c)!}{c!}\\\notag
&= \biggl(1+ \Theta\Big(\frac{1}{c}+ \frac{1}{n}\Big)\biggr) \frac{1}{2c+3}\frac{(n+c)!}{c!}\notag
\end{align*}
This concludes the proof. 
\end{proof}

We remark that, in the previous proof, we could have  simply taken the whole set $A_0$  to dominate the vertices in $A_0$. This leads to a dominating set of the following cardinality:

\begin{align*}
|S'|+|A_0| & \leq \frac{n+4}{2}\frac{(n+c-1)!}{(c+1)!}+ \frac{(n+c-1)!}{(c-1)!} \\
&=\biggl(1 + \frac{2c^2+c+2}{n+c} \biggr)  \frac{1}{2c+2} \frac{(n+c)!}{c!}
\end{align*}

For $c$ not constant, this upper bound is worse than the one claimed by Theorem~\ref{theo:undirected_NCNN}.

\paragraph*{Comparison with the directed case.} We conclude this section by observing that from Theorem~\ref{theo:directed_NCNN} we have $\gamma(\bg{n+c}{n}{n})\leq  \frac{(n+c-1)(n+c-1)!}{(c+1)!} = \frac{n+c-1}{(n+c)(c+1)} \frac{(n+c)!}{c!}$ which is worse that the result of Theorem~\ref{theo:undirected_NCNN} when $n$ goes to infinity. Furthermore, for small values of $n$ the results of this section show that, while for $n=2$, $\gamma(\bg{d}{2}{n})=\gamma(\dbg{d}{2}{n})=d-1$, for $n\geq 3$ the upper bound of $(d-1)^{n-1}$  derived from the directed case can be already improved to $(d-1)^{n-1} -(d-2)(d-1)^{n-4}$.

\section{Conclusions and open problems}

In this paper we introduce a new class of graphs, namely the $t$-constrained graphs, which are a natural generalization of de Bruijn and Kautz graphs and retain some of their structural properties. Under this new framework, de Bruijn graphs and Kautz graphs correspond to the cases $t=1$ and $t=2$, respectively. 
For these graphs we studied their domination number both in the directed and undirected case. 
Our work was motivated by the fact that while the domination number can be easily determined in the directed case for de Bruijn and Kautz graphs, the undirected case seems more difficult. 
Indeed, to the best of our knowledge, for these graphs, only the trivial bounds, deriving from (\ref{eq:lower_bound_grado_nondiretto}) and the directed case, were known. 
While we were able to improve the bounds for these two classes of graphs, the domination number of the $t$-constrained undirected de Bruijn graphs remains the main open problem of this paper. 
However, in the special case when the sequences labeling vertices are required to be  permutations (\textit{i.e.} $t=n$), we were able to exactly determine the domination number in both the directed and undirected case.

Finally, we believe the class of $t$-constrained de Bruijn graphs is worth being explored both in applied and theoretical context.

\acknowledgements
\label{sec:ack}
The authors want to thank the anonymous reviewers for their comments that helped to improve the manuscript.

\nocite{*}
\bibliographystyle{abbrvnat}
\bibliography{references}
\label{sec:biblio}

\end{document}